\newtheorem{THM}{Theorem}
\newtheorem{thm}{Theorem}
\numberwithin{thm}{section}
\newtheorem{lem}[thm]{Lemma}
\newtheorem{prop}[thm]{Proposition}
\DeclareMathOperator{\out}{Out}
\DeclareMathOperator{\aut}{Aut}
\DeclareMathOperator{\inn}{Inn}
\DeclareMathOperator{\vol}{vol}
\DeclareMathOperator{\lip}{Lip}
\newcommand{\PT}{\mathscr{PT}}
\newcommand{\doublebackslash}{\backslash\mkern-5mu\backslash}
\begin{document}
\title{Lipschitz metric isometries between Outer Spaces of virtually free groups}
\author{Rylee Alanza Lyman}
\maketitle
\begin{abstract}
Dowdall and Taylor observed that
given a finite-index subgroup of a free group,
taking covers induces an embedding
from the Outer Space of the free group
to the Outer Space of the subgroup,
that this embedding
is an isometry with respect to the (asymmetric) Lipschitz metric,
and that the embedding sends folding paths to folding paths.
The purpose of this note is to extend this result to virtually free groups.
We further extend a result Francaviglia and Martino,
proving the existence of ``candidates'' for the Lipschitz distance between points 
in the Outer Space of the virtually free group.
Additionally we identify a deformation retraction of the spine of the Outer Space
for the virtually free group
with the space considered by Krsti\'c and Vogtmann.
\end{abstract}

Let $F$ be a finitely generated virtually free group.
Throughout this paper we will always assume $F$ is virtually \emph{non-abelian} free,
but will just write \emph{virtually free.}
Consider the \emph{deformation space} $\mathscr{T} = \mathscr{T}(F)$ in the sense of
Forester and Guirardel--Levitt~\cite{GuirardelLevittDeformation,Forester}
whose points are actions of $F$ on simplicial $\mathbb{R}$-trees with finite stabilizers.
This deformation space is \emph{canonical} in the sense that all of $\out(F)$ acts on it,
and we call it the \emph{unprojectivized Outer Space} of $F$.
Projectivizing (by for example requiring the volume of the quotient graph of groups to be $1$)
yields the \emph{Outer Space} $\PT = \PT(F)$.

When $F = F_n$, the space $\PT(F_n)$ is the Culler--Vogtmann Outer Space $\operatorname{CV}_n$.
When $F$ is a free product of finite and cyclic groups,
the space $\PT$ is not quite Guirardel--Levitt's Outer Space $\mathscr{O}(F)$ 
for $F$~\cite{GuirardelLevitt}:
the difference is that trees in $\mathscr{O}(F)$ have trivial edge stabilizers,
while there is no such requirement for trees in $\PT(F)$.

In~\cite[Section 5.1]{DowdallTaylor}, Dowdall--Taylor
show that if $H$ is a finite-index subgroup of $F_n$,
then thinking of each $F_n$-tree $T \in \PT(F_n)$ as an $H$-tree
yields an embedding $i\colon \PT(F_n) \to \PT(H)$.
They show that $i$ is an isometry with respect to the (asymmetric) \emph{Lipschitz metric}
on $\PT(F_n)$ and $\PT(H)$,
and that $i$ sends a particular class of (directed) geodesics in $\PT(F_n)$
called \emph{folding paths} to folding paths in $\PT(H)$.
The purpose of this note is to extend these results to virtually free groups.

\begin{THM}\label{isometricembeddingthm}
    Let $H \le F$ be a finite-index subgroup of the virtually free group $F$.
    The induced map $i\colon \PT(F) \to \PT(H)$ is an isometry with respect to the Lipschitz metric.
    Moreover, $i$ maps folding paths in $\PT(F)$ to folding paths in $\PT(H)$.
\end{THM}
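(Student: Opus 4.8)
The plan is to follow Dowdall and Taylor's argument, the one genuinely new point being the bookkeeping forced by finite vertex and edge groups. For $T, T' \in \PT(F)$ normalized to covolume one, write $\Lambda_F(T,T')$ for the \emph{stretching factor} $\sup \ell_{T'}(g)/\ell_T(g)$, the supremum taken over infinite-order $g \in F$; finite-order elements are elliptic in every tree of $\mathscr{T}(F)$ (their stabilizers are finite) and so may be ignored, and the same holds for $H$, which is again finitely generated virtually free. By our extension of the theorem of Francaviglia and Martino, $d(T,T') = \log \Lambda_F(T,T')$, this supremum is realized on a candidate, and there is an \emph{optimal} (that is, $\Lambda_F(T,T')$-Lipschitz) $F$-equivariant map $T \to T'$; likewise with $H$ in place of $F$.

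First I would check that $i$ is well defined. Restricting the $F$-action on a tree $T$ to $H$ keeps all point stabilizers finite; moreover the minimal $H$-invariant subtree is all of $T$, since each hyperbolic $g \in F$ has a power $g^{n}$ ($n \ge 1$) lying in $H$ with the same (hence hyperbolic) axis, so that $T$, being the union of the axes of the hyperbolic elements of $F$, is a union of axes of elements of $H$ and therefore coincides with the minimal $H$-invariant subtree. Thus $i(T) \in \PT(H)$ once covolume is renormalized. Here one must use the normalization that weights each edge $e$ of the quotient graph of groups by $1/|G_e|$: a routine double-coset count then gives $\vol(T/H) = [F:H]\cdot\vol(T/F)$, so $i$ rescales covolume by the \emph{uniform} factor $[F:H]$ --- exactly what the next step requires.

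For the isometry, note that for $h \in H$ the translation length $\ell_T(h)$ does not depend on whether $T$ is viewed as an $F$-tree or an $H$-tree. Hence $\Lambda_H(i(T),i(T')) = \sup\{\ell_{T'}(h)/\ell_T(h)\} \le \Lambda_F(T,T')$; and since every infinite-order $g \in F$ has a power $g^{n} \in H$ with $\ell_{T'}(g^{n})/\ell_T(g^{n}) = \ell_{T'}(g)/\ell_T(g)$ by homogeneity of translation length, the reverse inequality holds as well, giving $\Lambda_H(i(T),i(T')) = \Lambda_F(T,T')$. Because $i$ scales covolume by a fixed factor, passing to covolume-one representatives on the $H$-side introduces no spurious constant, and we conclude $d(i(T),i(T')) = \log\Lambda_H(i(T),i(T')) = \log\Lambda_F(T,T') = d(T,T')$.

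Finally, recall that a folding path $(T_t)_{t\in[a,b]}$ in $\PT(F)$ is the one-parameter family of quotient trees produced from an optimal $F$-equivariant map $f\colon T_a \to T_b$ equipped with a train-track structure, the folding being governed entirely by the illegal turns of $f$ and by the germ of $f$ at points of $T_a$. Restricting the group action along $H \le F$, the map $f$ remains $H$-equivariant; it remains optimal, since $\lip(f) = \Lambda_F(T_a,T_b) = \Lambda_H(i(T_a),i(T_b))$; and its train-track structure, defined pointwise in the trees, is untouched. Hence the very same family $(T_t)$, now read as $H$-trees, is the folding path determined by $f$ regarded as an optimal $H$-map; that is, $(i(T_t))$ is a folding path in $\PT(H)$. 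I expect the real work to sit in this last paragraph: one has to make sure that ``optimal map'', the tension subforest, and the train-track structure (gates, legal turns at a point) are the genuinely local geometric notions depending only on the metric trees and on equivariance, so that forgetting from $F$ down to $H$ disturbs none of them; the covolume normalization is a smaller but genuine point that must be got right for the isometry statement to hold at all.
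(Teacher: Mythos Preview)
Your proposal is correct and follows essentially the same approach as the paper: both use the translation-length characterization of the Lipschitz distance together with the ``take a power to land in $H$'' trick for the isometry, and both observe that a folding map for $F$ is automatically a folding map for $H$ because the tension forest and train-track structure are local, metric notions independent of the acting group. The only cosmetic difference is that the paper obtains the inequality $d_{\PT(H)} \le d_{\PT(F)}$ by noting that an optimal $F$-map is in particular $H$-equivariant with the same Lipschitz constant, whereas you get it directly from the containment of the supremum sets; also note that the results you invoke (existence of optimal maps, $d = \log\Lambda$, witnesses) are Meinert's, not the candidates theorem, which is not needed here.
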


The Lipschitz metric for $\PT(F)$ when $F$ is not free
was first defined and studied by Meinert~\cite{Meinert}.
He identifies $\PT$ with the subset of $\mathscr{T}$
consisting of those trees $\Gamma$
such that the sum of the lengths of edges of the quotient graph $F\backslash\Gamma$ is $1$.
This notion of volume is not so well-behaved for general $\PT$:
the Lipschitz metric is only a pseudometric,
and taking covers is not multiplicative with respect to volume.
We introduce a new notion of volume which rectifies both of these issues:
we divide the length of an edge by the order of its stabilizer and then add.

Meinert proves~\cite[Theorem 4.14]{Meinert} that there are \emph{witnesses}
for the Lipschitz pseudometric distance between any two trees in $\mathscr{T}$:
hyperbolic group elements $g \in F$ such that
\[  d(\Gamma,\Gamma') = \log \frac{\|g\|_{\Gamma'}}{\|g\|_\Gamma},  \]
where $\|g\|_\Gamma$ denotes the hyperbolic translation length of $g$ on $\Gamma$.
He further shows that these witnesses may be taken to satisfy a weak finiteness condition.
By contrast, Francaviglia--Martino~\cite{FrancavigliaMartino,FrancavigliaMartinoTrainTracks}
show that in the case of $F$ a free product,
the projection of $g$ to the quotient graph of groups has certain nice properties.
To reduce the computational complexity of computing the Lipschitz distance 
between trees in $\PT$, we extend Francaviglia--Martino's result to $F$ virtually free.

\begin{THM}[cf.\ Theorem 9.10 of~\cite{FrancavigliaMartinoTrainTracks}]\label{candidatesexist}
    Let $\Gamma$ and $\Gamma'$ be trees in $\mathscr{T}$.
    There is a witness $g \in F$,
    the projection of whose axis into the quotient graph of groups $F\doublebackslash\Gamma$
    has one of the following forms.
    \begin{enumerate}
        \item An embedded simple loop.
        \item An embedded figure-eight (a wedge of two circles).
        \item An embedded barbell (two simple loops joined by a segment).
        \item An embedded singly degenerate barbell 
            (a non-free vertex and a simple loop joined by a segment).
        \item An embedded doubly degenerate barbell
            (two non-free vertices joined by a segment).
    \end{enumerate}
\end{THM}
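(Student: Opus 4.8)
The plan is to follow the strategy of Francaviglia--Martino, replacing their train-track machinery for free products with Meinert's analysis of the Lipschitz pseudometric for maps of graphs of groups with finite vertex and edge stabilizers. Fix $\Gamma, \Gamma' \in \mathscr{T}$ and write $\Lambda = \Lambda(\Gamma,\Gamma') = e^{d(\Gamma,\Gamma')}$ for the optimal Lipschitz constant. First I would recall, from Meinert and adapting Francaviglia--Martino's discussion, that there is an \emph{optimal difference map} $f\colon \Gamma \to \Gamma'$ — an $F$-equivariant map, linear on edges, with Lipschitz constant $\Lambda$ — and that $\Lambda = \sup_g \|g\|_{\Gamma'}/\|g\|_\Gamma$ over hyperbolic $g \in F$. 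The map $f$ descends to the quotient graphs of groups, and I would record its \emph{tension subgraph} $\Delta \subseteq F\doublebackslash\Gamma$, the union of images of edges stretched by exactly $\Lambda$, together with, at each vertex, the \emph{gate structure}: the partition of the germs of directions induced by $f$, on which the finite vertex group acts. A turn is \emph{legal} if its two directions lie in distinct gates, and a hyperbolic $g$ is \emph{legal} if every turn of the projection of its axis is legal.

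The first substantive step is the \emph{no-cancellation lemma}: if $g$ is a witness, then the projection of its axis lies entirely in $\Delta$ and $g$ is legal. Indeed $f$ being $\Lambda$-Lipschitz gives $\ell_{\Gamma'}(f\circ\text{axis}) \le \Lambda\,\|g\|_\Gamma$, while $\|g\|_{\Gamma'} \le \ell_{\Gamma'}(f\circ\text{axis})$, and the witness hypothesis forces equality throughout: no edge of the axis is under-stretched and no turn produces cancellation. By Meinert's Theorem~4.14 a witness exists, so this produces a legal hyperbolic element whose axis lies in $\Delta$. Conversely — and this is what makes the surgery below self-sustaining — any legal hyperbolic element whose axis lies in $\Delta$ is automatically a witness, since legality forces $\|g\|_{\Gamma'} = \Lambda\,\|g\|_\Gamma$.

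Now choose a witness $g$ minimizing a suitable complexity of its core $C_g \subseteq \Delta$ (the projection of its axis, as a subgraph of groups) — for instance the first Betti number of $C_g$ plus the number of vertices of $C_g$ at which $g$ uses a nontrivial vertex-group element, refined lexicographically by the combinatorial length of the axis. Legality forces every vertex of $C_g$ to have valence at least $2$, so $C_g$ is a core graph, and a short case analysis identifies the five listed forms as exactly the cores that a minimal witness can have. I would then argue by contradiction: if $C_g$ is not one of the five forms, then it contains either a vertex of valence at least $3$ through which the axis passes at least twice, or a non-free vertex together with extra topology. In either case one performs \emph{surgery} on the axis — re-pairing two of its passages through this vertex — to extract a hyperbolic element $g'$ whose core is a proper subgraph of $C_g$ of strictly smaller complexity, whose axis still lies in $\Delta$, and which is still legal, hence still a witness; this contradicts minimality. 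Iterating the surgery and a final cleanup brings the core down to exactly one of the five forms.

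The main obstacle is precisely this surgery step in the presence of nontrivial finite vertex (and edge) groups. At a free vertex the re-pairing argument is Francaviglia--Martino's: among the ways of re-routing the axis through a vertex of valence at least $3$, at least one keeps all turns in distinct gates, by a pigeonhole argument on gates. At a non-free vertex the germs are permuted by the finite vertex group, so ``distinct gates'' must be read $G_v$-equivariantly; moreover there is the genuinely new possibility that the axis is \emph{anchored} at $v$, using a nontrivial vertex-group element there and so contributing to the translation length — exactly the phenomenon encoded by the (singly and doubly) degenerate barbells, with no analogue over free groups. One must check that the surgery either removes such an anchor or preserves it while reducing the surrounding topology, and that the bookkeeping survives the presence of nontrivial finite edge stabilizers (which enter through the normalization of volume but, I expect, not through the combinatorics of the cores). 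Assembling these cases is, as in Theorem~9.10 of~\cite{FrancavigliaMartinoTrainTracks}, the bulk of the work; everything else is essentially formal once Meinert's framework is available. One might hope instead to reduce to the free case via \Cref{isometricembeddingthm}, pulling back a Francaviglia--Martino candidate along a finite cover $H \le F$; but this does not obviously succeed, since the image of a barbell under a graph-of-groups covering can be a theta graph, which is not on the list, so a surgery argument would be needed to push it back onto the list in any event.
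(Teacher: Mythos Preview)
Your outline is broadly sound and would likely succeed, but the paper takes a different and more concrete route. Rather than minimizing a Betti-number-plus-anchors complexity and re-pairing at high-valence vertices via a ``pigeonhole on gates,'' the paper follows Francaviglia--Martino's original argument and first proves a \emph{Sausages Lemma}: starting from any witness, one repeatedly shortens the projection $\gamma$ of its axis in $F\doublebackslash\Gamma$ (length in $F\doublebackslash\Gamma$ being the only complexity) until $\gamma$ has no triple points, no crossing double points, and no ``bad triangles,'' at which point $\gamma$ is a sausage---two embedded arcs sharing finitely many common subintervals---and a final cut reduces the sausage to one of the five candidate shapes. The device for preserving the witness property under each surgery is not gate combinatorics in $\Gamma$ but the \emph{image in $F\doublebackslash\Gamma'$}: one labels $\gamma$ by its $f$-image and checks whether each proposed sub-loop is cyclically reduced there; when none of the obvious sub-loops is, comparing first and last $\Gamma'$-edge labels forces a specific recombination that is. This is logically equivalent to your legality criterion (gates are fibres of $Df$), but it makes the case analysis short and explicit, and it handles non-free vertices with no extra bookkeeping, since ``cyclically reduced in a graph of groups'' already absorbs the vertex-group elements. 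Your approach is closer in spirit to Theorem~9.10 of \cite{FrancavigliaMartinoTrainTracks}; the paper's is closer to Lemma~3.14 and Proposition~3.15 of \cite{FrancavigliaMartino}. The place your sketch is vaguest---the pigeonhole on gates, and how it interacts with the $\mathcal{G}_v$-action on directions---is exactly what the paper's downstream-in-$\Gamma'$ trick sidesteps.
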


There are finitely many graph-of-groups edge paths
of the given forms, which we call \emph{candidates,}
and the set of candidates is independent of the choice of $\Gamma'$.

\paragraph{}
In~\cite{BestvinaFeighnHandel} and~\cite{KrsticVogtmann},
given a finite-rank non-abelian free group $F_n$ and a homomorphism
$\alpha\colon G \to \out(F_n)$ with finite domain,
a certain subcomplex of the \emph{spine} of Outer Space they term $L_\alpha$ is considered.
In those papers, vertices of $L_\alpha$ are marked graphs equipped with a $G$-action via $\alpha$,
and the contractibility of these simplicial complexes is used to prove that
centralizers of finite subgroups of $\out(F_n)$ are virtually of type F.

Let $G \le \out(F_n)$ be a finite subgroup, 
and let $E$ be the full preimage of $G$ in $\aut(F_n)$.
Then $E$ is a virtually free group.
Let $\aut^0(E)$ be the subgroup of $\aut(E)$ that leaves $F_n \cong \inn(F_n)$ invariant
and let $\out^0(E)$ be its image in $\out(E)$.
We define a deformation retraction $L = L(E)$ of the spine of the outer space of $E$.
Our final result is the following theorem;
see \Cref{precisethmC} for a precise statement.

\begin{THM}\label{normalizerthm}
    Given $G \le \out(F_n)$ and $E$ as above,
    the isometry $i\colon \PT(E) \to \PT(F_n)$ induces a simplicial isomorphism
    $L(E) \to L_\alpha$, where $\alpha\colon G \to \out(F_n)$ is the natural inclusion.
    The normalizer $N(G)$ of $G$ in $\out(F_n)$ leaves $L_\alpha$ invariant.
    We have the following short exact sequence
    \[  \begin{tikzcd}
        1 \ar[r] & G \ar[r] & N(G) \ar[r] & \out^0(E) \ar[r] & 1.
    \end{tikzcd}\]
\end{THM}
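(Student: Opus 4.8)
The plan is to treat the three assertions in turn. Write $N := \inn(F_n)$; since $\inn(F_n) \trianglelefteq \aut(F_n) \ge E$, the subgroup $N \cong F_n$ is normal in $E$ of finite index $|G|$, with $E/N \cong G$, and $N$ is torsion-free. For the \emph{simplicial isomorphism}, I would first unwind $i|_{L(E)}$ as an explicit bijection. For $\Gamma \in \PT(E)$, the torsion-free group $N$ acts freely, so $i(\Gamma)$ is represented by the honest marked metric graph $N\backslash\Gamma$, on which the deck group $E/N \cong G$ acts; computing the induced action on $\pi_1(N\backslash\Gamma) \cong N \cong F_n$ via $g\iota_x g^{-1} = \iota_{g(x)}$ shows it realizes $\alpha$, so $i(\Gamma) \in L_\alpha$. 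Conversely, given a reduced $G$-marked graph $\bar\Gamma \in L_\alpha$, I form the quotient graph of groups $G\doublebackslash\bar\Gamma$ and let $\Gamma$ be its Bass--Serre tree: its fundamental group is an extension of $\pi_1(\bar\Gamma) \cong F_n$ by $G$ inducing $\alpha$, and because $F_n$ is non-abelian free, $Z(F_n) = 1$, so there is a unique such extension up to equivalence, necessarily $\cong E$; hence $\Gamma$ is an $E$-tree with finite stabilizers and $i(\Gamma) = \bar\Gamma$. These two assignments are mutually inverse. To promote the bijection to a simplicial isomorphism carrying $L(E)$ onto $L_\alpha$, I would check that collapsing a forest in $\bar\Gamma$ corresponds under $i$ to collapsing its full preimage forest in $\Gamma$, so that the face relations --- and the combinatorial ``reduced'' conditions built into $L(E)$ and $L_\alpha$ --- match up.

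For \emph{$N(G)$-invariance}: let $\bar\Gamma \in L_\alpha$ with $G$-action $\rho$ realizing $\alpha$, and let $\Phi \in N(G)$. Precomposing the marking with a representative of $\Phi$ gives $\Phi\bar\Gamma$, on which $\rho$ realizes $g \mapsto \Phi\alpha(g)\Phi^{-1}$. Since $\Phi$ normalizes the (injective) image $\alpha(G)$, the assignment $c := \alpha^{-1}\circ(\Phi(-)\Phi^{-1})\circ\alpha$ is an automorphism of $G$, so $\rho\circ c^{-1}$ realizes $\alpha$ on $\Phi\bar\Gamma$; as reducedness is $\out(F_n)$-invariant, $\Phi\bar\Gamma \in L_\alpha$.

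For the \emph{short exact sequence}, I would define $\Psi\colon \aut^0(E) \to \out(F_n)$ by sending $\psi$ to the outer class of $\psi|_N \in \aut(N) \cong \aut(F_n)$. Choosing a lift $\theta \in \aut(F_n)$ of $\Psi(\psi)$ and applying $\psi$ to $e\iota_x e^{-1} = \iota_{e(x)}$ (for $e \in E$, $x \in F_n$) gives $\psi(e) = \theta e\theta^{-1}$ in $\aut(F_n)$ for all $e \in E$; hence conjugation by $\theta$ preserves $E$, so $\Psi(\psi) = [\theta]$ normalizes the image $G = E/N$ of $E$ in $\out(F_n)$, i.e.\ $\Psi$ takes values in $N(G)$, and it is visibly a homomorphism. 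Conversely, a lift $\theta$ of any $\Phi \in N(G)$ has $\theta E\theta^{-1} = E$ (since $\Phi \in N(G)$) and $\theta N\theta^{-1} = N$ (since $\theta\iota_x\theta^{-1} = \iota_{\theta x}$), so conjugation by $\theta$ restricts to an element of $\aut^0(E)$ mapping to $\Phi$; thus $\Psi$ is onto $N(G)$. Finally $\Psi(\psi) = 1$ forces $\theta \in N$, whence $\psi \in \inn(E)$ is conjugation by an element of $N$, while $\Psi(\operatorname{ad}_e) = [e]$ for $e \in E$; so $\ker\Psi = \operatorname{ad}(N) \subseteq \inn(E)$ and $\Psi(\inn E) = G$. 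Passing to the quotient by $\inn(E)$, $\Psi$ induces an isomorphism $\out^0(E) = \aut^0(E)/\inn(E) \xrightarrow{\ \sim\ } N(G)/G$, which is the asserted sequence; and one verifies that under $i\colon L(E) \xrightarrow{\ \sim\ } L_\alpha$ this isomorphism intertwines the $\out^0(E)$-action on $L(E)$ with the $N(G)$-action on $L_\alpha$ (the latter descending to $N(G)/G$ because $G$ fixes $L_\alpha$ pointwise).

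I expect the genuine difficulty to lie in the first step: fixing the precise definition of $L(E)$ and checking, at the level of graph-of-groups combinatorics, that blow-ups, collapses, vertex valences and the various reducedness hypotheses for $E$-trees correspond --- under passage to the $F_n$-cover --- to their $G$-equivariant counterparts for $F_n$, so that $i$ is a simplicial isomorphism onto $L_\alpha$ and not merely a bijection of vertex sets. Granting that dictionary, the identity $\psi(e) = \theta e\theta^{-1}$ is the only other real computation, and both the exact sequence and the action-compatibility follow formally.
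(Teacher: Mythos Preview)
Your proposal is correct and close to the paper's argument, with two packaging differences worth noting. For the bijection $L(E)\leftrightarrow L_\alpha$, you construct the inverse via Bass--Serre theory and invoke uniqueness of the extension $1\to F_n\to E\to G\to 1$ inducing $\alpha$ (using $Z(F_n)=1$), whereas the paper argues directly that a $G$-action on $F_n\backslash\Gamma$ realizing $\alpha$ promotes $\Gamma$ to an $F$-tree and then matches ``surviving'' edges with ``essential'' ones by identifying collapsible forests in $F\doublebackslash\Gamma$ with $G$-invariant forests in $F_n\backslash\Gamma$; your extension-uniqueness remark is a clean way to see the marking is well-defined. For the exact sequence, your key identity $\psi(e)=\theta e\theta^{-1}$ is precisely the paper's isomorphism $M\cong\aut^0(E)$ (where $M$ is the preimage of $N(G)$ in $\aut(F_n)$), after which the paper reads off the sequence from the snake lemma applied to $F_n\hookrightarrow M$, $E\hookrightarrow\aut^0(E)$, $G\to 1$, while you compute $\Psi(\inn E)=G$ and $\ker\Psi=\operatorname{ad}(N)$ directly and pass to the quotient. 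Both routes are short; the snake lemma is slightly more structural, your direct computation slightly more transparent.
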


In particular, if $F_n$ is characteristic in $E$, then $\out^0(E) = \out(E)$.

Here is the organization of this note.
We begin by recalling work of Meinert in \Cref{Lipschitzsection} 
culminating in the proof of \Cref{isometricembeddingthm}.
\Cref{candidatessection} is dedicated to the proof of \Cref{candidatesexist},
and \Cref{normalizerthm} is proved in \Cref{spinesection}.

\section{Outer Space and the Lipschitz metric}\label{Lipschitzsection}
\paragraph{Unprojectivized Outer Space.}
A point of $\mathscr{T} = \mathscr{T}(F)$
for a finitely generated virtually free group $F$
is the equivalence class of a minimal isometric action of $F$ 
on a simplicial $\mathbb{R}$-tree $\Gamma$
such that all stabilizers are finite.
The equivalence relation is $F$-equivariant isometry of trees.
We will, as is standard, always speak of actual trees,
rather than isometry classes of trees.

The group $\aut(F)$ acts on $\mathscr{T}$ on the right
by twisting the action,
i.e.~the tree $\Gamma.\Phi$ is the tree $\Gamma$
equipped with the $F$-action defined by first applying $\Phi$ and then acting.
Inner automorphisms of $F$ correspond to $F$-equivariant isometries,
so we get an action of $\out(F)$ on $\mathscr{T}$.

\paragraph{Quotient graph of groups.}
Associated to such a tree $\Gamma$, there is a \emph{quotient graph of groups}
$\mathcal{G} = F\doublebackslash\Gamma$, which we now describe,
following~\cite[3.2]{Bass}.
By subdividing at midpoints of edges of $\Gamma$ which are inverted by the $F$-action,
we may assume that the action of $F$ on $\Gamma$ is without inversions in edges,
so the quotient graph $G = F\backslash\Gamma$ naturally inherits a graph structure.
Write $p\colon \Gamma \to G$ for the quotient map.
Choose connected subgraphs $T \subset S \subset \Gamma$
such that $p|_S\colon S \to G$ is a bijection on edges (so $S$ is a \emph{fundamental domain} for the action)
and $p|_T\colon T \to G$ is a bijection on vertices (so $T$ projects to a \emph{spanning tree}).
For $e$ an edge and $v$ a vertex of $G$,
write $\tilde e$ and $\tilde v$ for the unique preimage of $e$ and $v$
in $S$ and $T$ respectively.
For an oriented edge $e$ of a graph, write $\tau(e)$ for its terminal vertex.
For each oriented edge $e$ of $G$,
let $g_e \in F$
be an element such that
$g_e.\tau(\tilde e) = \widetilde{\tau(e)}$.
If $\tau(\tilde e) \in T$, choose $g_e = 1$.
The groups $\mathcal{G}_v$ and $\mathcal{G}_e$
for $v$ a vertex and $e$ an edge of $G$ are the stabilizers in $F$
of $\tilde v$ and $\tilde e$ respectively.
For an oriented edge $e$ of $G$,
the injective homomorphism $\mathcal{G}_e \to \mathcal{G}_{\tau(e)}$
is the restriction of the map $x \mapsto g_e x g_e^{-1}$ to $\mathcal{G}_e$.

The graph of groups $\mathcal{G} = F\doublebackslash\Gamma$
is equipped with a \emph{metric} defined so that $p\colon \Gamma \to \mathcal{G}$
restricts to an isometry on each edge
and a \emph{marking,} an isomorphism $F\cong \pi_1(\mathcal{G})$ well-defined up to inner automorphism.

\paragraph{Volume, projectivized Outer Space.}
Given a tree $\Gamma \in \mathscr{T}$, we define the \emph{volume} of $\Gamma$ to be
\[  \vol(\Gamma) = \vol(F\doublebackslash\Gamma) = \sum_{e} \frac{\ell(e)}{|\mathcal{G}_e|}, \]
where $\ell(e)$ denotes the length of the edge $e$,
and where the sum is taken over the edges $e$ of $F\doublebackslash\Gamma$
(equivalently the edges of some and hence any fundamental domain $S$).
It is clear that if $\Gamma'$ has hyperbolic length function $\|\cdot\|_{\Gamma'}$
(see e.g.~\cite{CullerMorgan})
equal to $\lambda\|\cdot\|_\Gamma$ for some $\lambda > 0$,
then $\vol(\Gamma') = \lambda\vol(\Gamma)$,
so we may and will identify $\PT(F)$ with the subset of $\mathscr{T}(F)$
comprising the trees with volume $1$.
Let us remark for experts that because $\mathcal{T}(F)$ is locally compact
(and its spine is locally finite),
this identification is as innocuous as it is for $\operatorname{CV}_n$.

\paragraph{Stretch factors.}
We begin by reporting on work of Meinert~\cite{Meinert}.
Given two trees $\Gamma$ and $\Gamma'$ in $\mathscr{T}$,
there is an $F$-equivariant map $f\colon \Gamma \to \Gamma'$,
which we may take to be Lipschitz continuous,
and we write $\lip(f)$ for its Lipschitz constant.
As in~\cite[p.~998]{Meinert}, it suffices to consider
only those $F$-equivariant maps which are \emph{piecewise linear,}
which is to say for each edge $\tilde e$ of $\Gamma$,
the map $f$ is either constant on $\tilde e$ or linear with constant slope on $\tilde e$.
We define
\[  \lip(\Gamma,\Gamma') = \inf\{\lip(f) : f\colon \Gamma \to \Gamma' \text{ is $F$-equviariant}\}.\]
Meinert proves~\cite[Theorem 4.6]{Meinert}
using nonprincipal ultrafilters
that there exists an $F$-equivariant piecewise linear Lipschitz map
$f\colon \Gamma \to \Gamma'$ such that $\lip(f) = \lip(\Gamma,\Gamma')$.
We remark that since our trees are locally finite
and the quotient graphs of groups are compact,
it should be possible to give a proof using the Arzela--Ascoli theorem.

Meinert further proves~\cite[Theorem 4.14]{Meinert} that
\[  \lip(\Gamma,\Gamma') = \sup_{g} \frac{\|g\|_{\Gamma'}}{\|g\|_{\Gamma}}, \]
where the supremum is taken over all hyperbolic elements $g \in F$,
and that the supremum is realized.

\paragraph{Train track structures.}
Given a tree $\Gamma \in \mathscr{T}$ and a point $\tilde x \in \Gamma$,
a \emph{direction} at $\tilde x$
is a germ of geodesics beginning at $\tilde x$.
Equivalently, a direction is a component of the complement $\Gamma\setminus\{\tilde x\}$.
The set of directions at a point $\tilde x$ is denoted $D_{\tilde x}\Gamma$.
The action of $F$ on $\Gamma$ induces a permutation of the set of direction in $\Gamma$.
A \emph{train track structure} on $\Gamma$
is an $F$-equivariant collection of equivalence relations on the set of directions of $\Gamma$
with the property that there are at least two equivalence classes at each point.
That is to say, if $\delta_1\sim\delta_2$ as directions in $D_{\tilde x}\Gamma$,
then for all $g \in F$, we have $g.\delta_1 \sim g.\delta_2$ as directions in $D_{g.\tilde x}\Gamma$.
A train track structure partitions the directions at $D_{\tilde x}\Gamma$
into \emph{gates,}
and an $F$-equivariant collection of equivalence relations on the set of directions of $\Gamma$
is a train track structure if and only if there are at least two gates at every point.
A \emph{turn} is a pair of directions based at a common point.
It is \emph{illegal} if the directions belong to the same gate, and \emph{legal} otherwise.
A geodesic $\tilde\gamma$ in $\Gamma$ through $\tilde x$ is said to
\emph{cross} the turn at $\tilde x$ determined by the components of the complement
$\Gamma\setminus\{\tilde x\}$ containing $\tilde\gamma$.
A geodesic $\tilde\gamma$ is \emph{legal} if it makes only legal turns and \emph{illegal} otherwise.

\paragraph{Tension forest.}
Let $f\colon \Gamma \to \Gamma'$ be a piecewise linear $F$-equivariant map
between trees in $\mathscr{T}$.
The union of all closed edges of $\Gamma$
on which $f$ attains its maximal slope (which must be nonzero)
is $\Delta(f)$, the \emph{tension forest} of $f$.
It is $F$-invariant.
The map $f$ sends directions pointing into $\Delta = \Delta(f)$ to directions of $\Gamma'$,
and thus defines an equivalence relation on turns in $\Delta$
by saying two directions of $\Delta$ are equivalent if they are mapped to the same direction by $f$.

\paragraph{Optimal maps.}
A piecewise linear $F$-equivariant map $f\colon \Gamma \to \Gamma'$
is an \emph{optimal map} if $\lip(f) = \lip(\Gamma,\Gamma')$ and its tension forest $\Delta(f)$
has an induced train track structure.
Meinert proves~\cite[Proposition 4.12]{Meinert}
that every $F$-equivariant piecewise linear map realizing $\lip(\Gamma,\Gamma')$
is homotopic to an optimal map $f'\colon \Gamma \to \Gamma'$
with $\Delta(f') \subset \Delta(f)$.

Meinert proves~\cite[Lemma 4.15]{Meinert}
that for an optimal map $f\colon \Gamma \to \Gamma'$,
$\lip(f) = \frac{\|g\|_{\Gamma'}}{\|g\|_\Gamma}$
if and only if the axis of $g$ in $\Gamma$ is contained in the tension forest $\Delta(f)$
and is legal with respect to the train track structure determined by $f$.

\paragraph{Lipschitz metric.}
We define the \emph{Lipschitz metric} on $\PT$ as
\[  d(\Gamma,\Gamma') = \log\lip(\Gamma,\Gamma'). \]
\begin{prop}\label{actualmetric}
    For all $\Gamma$, $\Gamma'$ and $\Gamma'' \in \PT$, we have
    \begin{enumerate}
        \item $d(\Gamma,\Gamma') \ge 0$ and $d(\Gamma,\Gamma') = 0$ implies
            $\Gamma$ and $\Gamma'$ are $F$-equivariantly isometric.
        \item $d(\Gamma,\Gamma'') \le d(\Gamma,\Gamma') + d(\Gamma',\Gamma'')$.
    \end{enumerate}
\end{prop}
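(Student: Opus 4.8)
The plan is to deduce both parts from a single \emph{volume inequality}: if $f\colon\Gamma\to\Gamma'$ is any $F$-equivariant piecewise linear map between trees in $\mathscr T$, then $f$ is surjective (its image is path-connected, hence a convex $F$-invariant subtree of the minimal tree $\Gamma'$) and
\[ \vol(\Gamma')\le\lip(f)\,\vol(\Gamma). \]
This is where the stabilizer-weighted volume $\vol(\Gamma)=\sum_e\ell(e)/|\mathcal{G}_e|$ does the real work — with the naive volume this inequality fails, which is exactly why Meinert obtains only a pseudometric. To prove it I would fix a fundamental domain $S'$ for $\Gamma'$ and, for almost every point $y'$ of an edge $\tilde e'_j$ of $S'$, choose a preimage $x$ in the interior of an edge $\tilde e$ of $\Gamma$ on which $f$ has nonzero slope; this gives a measurable section $\sigma_j$ of $f$ over $\tilde e'_j$. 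Two observations carry the estimate: first, $\sigma_j$ stretches length by a factor at least $\lip(f)^{-1}$, since $f$ has slope at most $\lip(f)$; second, whenever an edge $\tilde e$ of $\Gamma$ is carried across an interior subarc of $\tilde e'_j$, the stabilizer $\mathcal{G}_{\tilde e}$ fixes that subarc, hence fixes $\tilde e'_j$ pointwise (using absence of inversions), so $|\mathcal{G}_{\tilde e}|\le|\mathcal{G}_{\tilde e'_j}|$ and the $\vol$-density $1/|\mathcal{G}_{\tilde e}|$ is at least $1/|\mathcal{G}_{\tilde e'_j}|$ along the image of $\sigma_j$. Combining, $\vol$ of $\sigma_j(\tilde e'_j)$ is at least $\lip(f)^{-1}$ times the weighted length of $\tilde e'_j$; and since distinct edges of $S'$ lie in distinct $F$-orbits, the union $\bigsqcup_j\sigma_j(\tilde e'_j)$ meets each $F$-orbit at most once, so summing gives $\vol(\Gamma)\ge\lip(f)^{-1}\vol(\Gamma')$.

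Granting this, part (1) is short. For nonnegativity: given $\Gamma,\Gamma'\in\PT$, pick $F$-equivariant piecewise linear maps with $\lip(f)$ arbitrarily close to $\lip(\Gamma,\Gamma')$; since both volumes are $1$, the inequality forces $\lip(\Gamma,\Gamma')\ge1$, i.e.\ $d(\Gamma,\Gamma')\ge0$. For the separation statement, suppose $d(\Gamma,\Gamma')=0$, so $\lip(\Gamma,\Gamma')=1$; by Meinert's realization theorem there is an $F$-equivariant piecewise linear $f$ with $\lip(f)=1$. Then every inequality above is an equality, which forces: (a) $f$ has slope exactly $1$ along each section, so $f$ collapses no edge and is a local isometry there; (b) $\mathcal{G}_{\tilde e}=\mathcal{G}_{\tilde e'_j}$ along the sections; and (c) $\bigsqcup_j\sigma_j(\tilde e'_j)$ is, up to a null set, a fundamental domain for $\Gamma$. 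From (a)--(c) I would argue that $f$ has no folds — a fold at a vertex would produce a second preimage of an interior point of $S'$ lying outside the $F$-orbit of the fundamental domain from (c), contradicting (c) together with (a)--(b) — and hence $f$ is an $F$-equivariant isometric embedding; being surjective, it is an $F$-equivariant isometry $\Gamma\cong\Gamma'$.

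Part (2) is the standard composition argument, independent of the above. Given $\Gamma,\Gamma',\Gamma''\in\PT$ and $\varepsilon>0$, choose $F$-equivariant piecewise linear maps $f\colon\Gamma\to\Gamma'$ and $h\colon\Gamma'\to\Gamma''$ with $\lip(f)\le\lip(\Gamma,\Gamma')+\varepsilon$ and $\lip(h)\le\lip(\Gamma',\Gamma'')+\varepsilon$. Then $h\circ f\colon\Gamma\to\Gamma''$ is $F$-equivariant and Lipschitz with $\lip(h\circ f)\le\lip(h)\lip(f)$, so after subdividing $\Gamma$ so that $h\circ f$ is piecewise linear (which does not change the infimum defining $\lip$) we get $\lip(\Gamma,\Gamma'')\le\lip(h)\lip(f)$; letting $\varepsilon\to0$ and taking logarithms gives $d(\Gamma,\Gamma'')\le d(\Gamma,\Gamma')+d(\Gamma',\Gamma'')$.

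The main obstacle I anticipate is the rigidity in part (1): extracting an honest $F$-equivariant isometry from equality in the volume inequality requires carefully controlling the behaviour of $f$ at vertices — ruling out folding — and the interaction between the section and the finite edge stabilizers. A secondary fussy point is the measure-theoretic bookkeeping in the volume inequality itself, namely verifying that the chosen sections, taken together, genuinely inject into the quotient graph of groups; but this should be manageable given local finiteness of the trees and compactness of the quotients.
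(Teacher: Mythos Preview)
Your proposal is correct, and the key mechanism is the same as the paper's: with the stabilizer-weighted volume, a $1$-Lipschitz $F$-equivariant surjection between volume-$1$ trees that is not an isometry must fold two edges in the same $F$-orbit, and this strictly enlarges the stabilizer of the image edge, forcing $\vol(\Gamma')<1$.

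The difference is one of packaging rather than idea. The paper offloads most of the work: nonnegativity and the triangle inequality are cited directly from Meinert, and for the separation statement the paper invokes the Francaviglia--Martino/Meinert argument to conclude that the induced map $F\doublebackslash f$ of quotient graphs of groups is an isometry, then observes that any remaining fold upstairs lies within a single edge orbit and hence enlarges an edge group. You instead prove a single self-contained inequality $\vol(\Gamma')\le\lip(f)\,\vol(\Gamma)$ via measurable sections and the inclusion $\mathcal{G}_{\tilde e}\subset\mathcal{G}_{f(\tilde e)}$, and read off nonnegativity and separation as the inequality and its equality case. Your route has the advantage of making transparent exactly where the stabilizer weights enter (and why Meinert's unweighted volume yields only a pseudometric); the paper's route is shorter on the page because the ``quotient map is an isometry'' step is borrowed wholesale. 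Your equality analysis --- (a) slope $1$ everywhere, (b) $\mathcal{G}_{\tilde e}=\mathcal{G}_{f(\tilde e)}$, (c) the sections assemble to a full-measure fundamental domain --- is exactly what the cited arguments would unpack to, and your no-fold conclusion from (a)--(c) is sound: a fold would give two preimages of an interior point of $S'$ which, by (b), lie in distinct $F$-orbits, yet by (c) both orbits must meet the section over that single point of $S'$, a contradiction.
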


\begin{proof}
    Meinert proves all of these statements as~\cite[Proposition/Definition 4.2]{Meinert}
    with the exception of the assertion that $d(\Gamma,\Gamma') = 0$ implies $\Gamma$ and $\Gamma'$
    are $F$-equivariantly isometric.
    Of course, his volume is defined slightly differently,
    but the arguments apply with our new notion of volume.

    So suppose $d(\Gamma,\Gamma') = 0$ but that $\Gamma$ and $\Gamma'$
    are not equivariantly isometric,
    and let $f\colon \Gamma \to \Gamma'$ be an optimal map.
    Because $f$ is $F$-equivariant,
    there is an induced map 
    $F\doublebackslash f\colon F\doublebackslash\Gamma \to F\doublebackslash\Gamma'$
    (it is in fact a homotopy equivalence of graphs of groups as defined in~\cite[Section 1]{Lyman}).
    Since $\Gamma$ and $\Gamma'$ are minimal,
    $f$ is surjective,
    so $F\doublebackslash f$ is as well,
    and the argument in the proof of~\cite[Lemma 4.16]{FrancavigliaMartino}~\cite[Proposition 4.16]{Meinert}
    shows that $F\doublebackslash f$ is an isometry.
    Because $f$ is not an $F$-equivariant isometry,
    it must fold a pair of edges, which must be in the same edge orbit,
    so some $\mathcal{G}'_{F\doublebackslash f(e)}$ is larger than $\mathcal{G}_e$.
    But this implies that $\vol(\Gamma') < 1$, a contradiction.
\end{proof}

\paragraph{Folding paths.}
An optimal map $f\colon \Gamma \to \Gamma'$ between trees in $\mathscr{T}$
is a \emph{folding map} if the tension forest satisfies $\Delta(f) = \Gamma$.
Associated to a folding map, one can use a technique of Skora
known as ``folding all illegal turns at speed $1$''
to obtain a $1$-parameter family of trees $\Gamma_t$ in $\mathscr{T}$ for $t \ge 0$
such that $\Gamma_0 = \Gamma$ and $\Gamma_t = \Gamma'$ for $t$ large
and folding maps $f_{st}\colon \Gamma_s \to \Gamma_t$ for $s \le t$ satisfying
\[  f_{0T} = f \text{ for all $T$ large, } f_{ss} = \operatorname{Id}_{\Gamma_s}, \text{ and }
f_{rt} = f_{st}f_{rs} \text{ for $r \le s \le t$}, \]
with $f_{rs}$ and $f_{rt}$ inducing the same train track structure on $\Gamma_{r}$.
See~\cite{BestvinaFeighnFreeFactor,FrancavigliaMartino} for more details,
in particular the argument of~\cite{BestvinaFeighnFreeFactor} that shows
that the folding paths that we are about to define are unique,
depending only on the folding map $f\colon \Gamma \to \Gamma'$.
If we rescale each $\Gamma_t$ so that it belongs to $\PT$,
Meinert shows~\cite[Theorem 4.23]{Meinert} that the
map $t \mapsto \Gamma_t$ defines a \emph{directed geodesic} in the sense that for $r \le s \le t$,
we have
\[  d(\Gamma_r,\Gamma_s) + d(\Gamma_s,\Gamma_t) = d(\Gamma_r,\Gamma_t).  \]
With our result \Cref{actualmetric} in hand,
we may reparametrize our directed geodesics by arc length,
i.e.~as a path $[0,d(\Gamma,\Gamma')] \to \PT$
such that $\Gamma = \Gamma_0$, $\Gamma_{d(\Gamma,\Gamma')} = \Gamma'$ and for $s \le t$ we have
\[  d(\Gamma_s,\Gamma_t) = t - s.  \]
This is a \emph{folding path.}

\paragraph{The Outer Space of a subgroup}
Fix $H \le F$ a subgroup of finite index $n$.
Each tree $\Gamma \in \mathscr{T}(F)$ is, by restricting the action,
a minimal $H$-tree such that all stabilizers are finite,
so we may view $\Gamma$ as a tree in $\mathscr{T}(H)$. 
Since an $F$-equivariant isometry is in particular $H$-equivariant,
we see that this map $i\colon \mathscr{T}(F) \to \mathscr{T}(H)$ is well-defined.
The natural map $H\doublebackslash\Gamma \to F\doublebackslash\Gamma$
is a covering map in the sense of Bass~\cite[2.6]{Bass}.
The equation in~\cite[Proposition 2.7]{Bass}
implies that
\[  \vol(H\doublebackslash\Gamma) = n\vol(F\doublebackslash\Gamma). \]
Thus if $\Gamma \in \PT(F)$, scaling edge lengths by $\frac{1}{n}$ yields 
a point $\frac{1}{n}\Gamma \in \PT(H)$.
We are ready to prove \Cref{isometricembeddingthm}, which we restate here.
\begin{thm}
    Suppose $H \le F$ has finite index $n$.
    The map $i\colon \PT(F) \to \PT(H)$ given by $\Gamma \mapsto \frac{1}{n}\Gamma$
    is an isometry with respect to the Lipschitz metric.
    Moreover, $i$ maps folding paths in $\PT(F)$ to folding paths in $\PT(H)$.
\end{thm}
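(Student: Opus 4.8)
The plan is to split the statement into the metric assertion and the folding-path assertion, and to reduce both to a comparison of the stretch function $\lip(\Gamma,\Gamma')$ computed with $F$-equivariant maps and the analogous quantity $\lip_H(\Gamma,\Gamma')$ computed with $H$-equivariant maps. First I would observe that for a piecewise linear map $f\colon\Gamma\to\Gamma'$ the Lipschitz constant is unchanged if we rescale domain and range by the common factor $\frac1n$, and that an isometry of $\frac1n\Gamma$ is an isometry of $\Gamma$, so $H$-equivariance of $f$ is unaffected by the rescaling. Consequently $\lip(i\Gamma,i\Gamma')=\lip_H(\Gamma,\Gamma')$, and since $d=\log\lip$ it suffices to prove $\lip_H(\Gamma,\Gamma')=\lip(\Gamma,\Gamma')$; taking logarithms then yields $d(i\Gamma,i\Gamma')=d(\Gamma,\Gamma')$.

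One inequality is immediate: every $F$-equivariant map is $H$-equivariant, so the infimum defining $\lip_H(\Gamma,\Gamma')$ is taken over a larger class of maps, whence $\lip_H(\Gamma,\Gamma')\le\lip(\Gamma,\Gamma')$. For the reverse inequality I would invoke Meinert's realization theorem (Theorem 4.14 of \cite{Meinert}), which applies to the finitely generated virtually free groups $F$ and $H$ alike and gives $\lip(\Gamma,\Gamma')=\sup_g\|g\|_{\Gamma'}/\|g\|_\Gamma$ over hyperbolic $g$, and likewise for $H$. Given a hyperbolic $g\in F$, its left action on the finite set $F/H$ has finite order, so $g^k\in H$ for some $k\ge1$; since $g$ is hyperbolic so is $g^k$, with $\|g^k\|_T=k\|g\|_T$ for $T\in\{\Gamma,\Gamma'\}$, hence $\|g^k\|_{\Gamma'}/\|g^k\|_\Gamma=\|g\|_{\Gamma'}/\|g\|_\Gamma$. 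Thus every ratio appearing in the supremum for $\lip(\Gamma,\Gamma')$ is attained by an element of $H$, so $\lip(\Gamma,\Gamma')\le\lip_H(\Gamma,\Gamma')$, and equality --- hence the isometry statement --- follows.

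For folding paths, let $f\colon\Gamma\to\Gamma'$ be a folding map, with associated $F$-folding path obtained by projectivizing and reparametrizing by arc length the unprojectivized Skora fold $t\mapsto\Gamma_t$. I would first verify that $if\colon i\Gamma\to i\Gamma'$ is a folding map for the $H$-action. Optimality is exactly the content of the previous paragraph: $\lip(if)=\lip(f)=\lip(\Gamma,\Gamma')=\lip_H(\Gamma,\Gamma')=\lip(i\Gamma,i\Gamma')$. The tension forest, being the maximal-slope locus, is unaffected by the common rescaling, so $\Delta(if)=i\Gamma$ because $\Delta(f)=\Gamma$; and the equivalence relation on directions ``identified by $f$'' that defines the train track structure on the tension forest is intrinsic to $f$, so it is literally the same relation used for the $F$-action --- in particular $F$-equivariant hence $H$-equivariant, with the same ($\ge2$) number of gates at every point. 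Therefore $if$ is a folding map, and it has its own Skora fold giving a folding path in $\PT(H)$ starting at $i\Gamma$.

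It remains to identify this folding path with the $i$-image of the original one. Here one uses that Skora's procedure of folding all illegal turns at unit speed is intrinsic to the metric tree and the folding map --- the group enters only through equivariance, which passes to $H$ --- so the unprojectivized Skora fold of $if$ is the family $\{\frac1n\Gamma_t\}$ (with a parametrization sped up by the factor $n$ coming from the rescaled metric, which is irrelevant once we reparametrize by arc length). Since Bass's covering formula gives $\vol(H\doublebackslash\frac1n\Gamma_t)=\vol(F\doublebackslash\Gamma_t)$, projectivizing in $\PT(H)$ applies the same scalar $\vol(F\doublebackslash\Gamma_t)^{-1}$ as in $\PT(F)$ followed by the extra $\frac1n$ of $i$; hence the folding path of $if$ equals $i$ composed with the folding path of $f$. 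The step I expect to demand the most care is this last one --- articulating precisely that Skora's construction does not see the ambient group, and bookkeeping the two opposed rescalings (the $\frac1n$ built into $i$ and the $n$ in Bass's volume formula) so that they cancel. The metric statement itself, by contrast, reduces cleanly to the power trick for hyperbolic elements together with Meinert's realization theorem.
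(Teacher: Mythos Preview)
Your proof is correct and follows essentially the same approach as the paper: one inequality comes from $F$-equivariant maps being $H$-equivariant, the reverse from Meinert's length-ratio characterization together with the power trick $g^k\in H$, and for folding paths the observation that $\frac{1}{n}f$ remains a folding map whose Skora fold coincides with the rescaled original. You are somewhat more explicit than the paper in verifying optimality, the tension forest, the gate condition, and the volume bookkeeping, but the underlying argument is the same.
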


\begin{proof}
    Fix $\Gamma$ and $\Gamma'$ in $\PT(F)$.
    If $f\colon \Gamma \to \Gamma'$ is an optimal map,
    then $\frac{1}{n}f\colon \frac{1}{n}\Gamma \to \frac{1}{n}\Gamma'$
    has Lipschitz constant $\lip(f)$, so
    \[  d_{\PT(H)}\left(\frac{1}{n}\Gamma,\frac{1}{n}\Gamma'\right) \le d_{\PT(F)}(\Gamma,\Gamma').\]
    We have seen that 
    \[  d_{\PT(H)}\left(\frac{1}{n}\Gamma,\frac{1}{n}\Gamma'\right) 
    = \log\sup_{h}\frac{\|h\|_{\frac{1}{n}\Gamma'}}{\|h\|_{\frac{1}{n}\Gamma}}
    = \log\sup_{h}\frac{\|h\|_{\Gamma'}}{\|h\|_{\Gamma}} \]
    where the supremum is taken over the hyperbolic elements of $H$.
    We have seen that there exists a hyperbolic $g_0 \in F$ realizing the supremum
    \[  \sup_g \frac{\|g\|_{\Gamma'}}{\|g\|_{\Gamma}}, \]
    where now the supremum is taken over the hyperbolic elements of $F$.
    If we take $k \ge 1$ such that $g_0^k \in H$,
    we see that
    \[  d_{\PT(H)}\left(\frac{1}{n}\Gamma,\frac{1}{n}\Gamma'\right) 
    \ge d_{\PT(F)}(\Gamma,\Gamma'), \]
    so $i$ is an isometry.
    It is clear that if $f\colon \Gamma \to \Gamma'$ is a folding map,
    then $\frac{1}{n}f\colon \frac{1}{n}\Gamma \to \frac{1}{n}\Gamma'$ is a folding map,
    and thus it follows that the folding path induced by $\frac{1}{n}f$
    agrees with that induced by $f$ and then scaling by $\frac{1}{n}$.
\end{proof}

\section{Candidates}~\label{candidatessection}
The purpose of this section is to prove \Cref{candidatesexist}.
Given trees $\Gamma$ and $\Gamma'$ in $\mathscr{T}$,
let $f\colon \Gamma \to \Gamma'$ be an optimal map.
It may not be the case that $f$ maps vertices to vertices.
We may remedy this by ($F$-equivariantly) declaring the images of vertices to be vertices.
This done,~\cite[Proposition 1.2]{Lyman}
allows us to consider the induced map of quotient graphs of groups
$F\doublebackslash f\colon F\doublebackslash\Gamma \to F\doublebackslash\Gamma'$.
The tension forest $\Delta(f)$ projects to a \emph{tension subgraph of groups}
$\Delta(F\doublebackslash f)$ in $F\doublebackslash\Gamma$.
Note that $F$-equivariance of $f$ implies that it is a subgraph of groups
and not a subgraph of subgroups in the sense of Bass~\cite{Bass}.
(Put another way, being optimally stretched is a property of the edge orbit.)
Since $f$ is optimal, $\Delta(F\doublebackslash f)$ is a \emph{core} subgraph of groups
in the sense that each valence-one vertex of $\Delta(F\doublebackslash f)$
satisfies that the edge-to-vertex group inclusion of the incident edge is not surjective.

If $g$ is hyperbolic in $\Gamma$,
the projection of the axis of $g$ to $F\doublebackslash\Gamma$,
after subdividing at preimages of vertices and modding out by the action of $g$
yields an immersion (in the sense of Bass~\cite[1.2]{Bass}) 
of a subdivided circle $C = C_g$ into $F\doublebackslash\Gamma$.
The image of $C$ in $F\doublebackslash\Gamma$
is a cyclically reduced graph-of-groups \emph{edge path}
\[  \gamma = g_0e_1g_1\ldots e_k g_k  \]
in $F\doublebackslash\Gamma$.
(See~\cite{Trees,Bass,Lyman} for more details on graphs of groups.)
In what follows we will confuse $g$, $C_g$ and $\gamma$.
We think of the edges of $C$ as being \emph{labeled} by the edges
$e_i$ of $F\doublebackslash\Gamma$ crossed by this edge path
and vertices of $C$ as being labeled by the corresponding vertex group elements.
(Note that up to cyclically re-ordering, we may assume that $g_0$ is trivial;
or put another way, one vertex is labeled $g_k g_0$.)
If $g$ is a \emph{witness,} in the sense that
\[  \lip(\Gamma,\Gamma') = \frac{\|g\|_{\Gamma'}}{\|g\|_\Gamma},  \]
then in fact, the image of $C_g$ lies in $\Delta(F\doublebackslash f)$,
and the map $C_g \to \Delta(F\doublebackslash f) \to F\doublebackslash\Gamma'$
becomes an immersion in the sense of Bass~\cite[1.2]{Bass} after subdividing at the preimages of vertices.
Thus it will sometimes instead be convenient to perform that subdivision
and think of $C_g$ as being labeled 
by edges and vertex group elements in $F\doublebackslash\Gamma'$ instead.

This done, the main step of the proof follows nearly exactly as in
Francaviglia and Martino~\cite{FrancavigliaMartino}.

\begin{lem}[``Sausages Lemma'' cf.~Lemma 3.14 of~\cite{FrancavigliaMartino}]\label{sausagelemma}
    Let $f\colon \Gamma \to \Gamma'$ be an optimal map.
    There is a witness $g$ such that the immersed circle $C_g$ is a ``sausage'':
    the image $\gamma$ of $C_g$ is a cyclically reduced path in $F\backslash\Gamma$
    of the form $\gamma = \gamma_1\bar\gamma_2$,
    where each $\gamma_i$ is a path in $F\doublebackslash\Gamma$
    that can be parametrized by $[0,1]$ in such a way that
    $\gamma_1$ and $\gamma_2$ are embedded,
    and there exist a finite family of disjoint closed intervals $I_j \subset [0,1]$,
    each one possibly consisting of a single point,
    such that $\gamma_1(t) = \gamma_2(s)$ if and only if $t = s$
    and $t$ belongs to some $I_j$.
\end{lem}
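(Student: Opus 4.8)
We sketch the plan, which is to follow the argument of Francaviglia--Martino \cite[Lemma 3.14]{FrancavigliaMartino} closely, paying the extra bookkeeping cost forced by the graph-of-groups setting. First I would recall from Meinert's results (Theorem 4.14 and Lemma 4.15 above) that the witnesses for the optimal map $f$ are exactly the hyperbolic $g \in F$ whose axis $A_g$ in $\Gamma$ lies in the tension forest $\Delta(f)$ and is legal for the train track structure induced by $f$; in particular the set of witnesses is nonempty. Descending to the quotient graph of groups, a witness is the same data as a cyclically reduced nontrivial \emph{legal} closed edge path $\gamma = g_0 e_1 g_1 \dots e_k g_k$ in the tension subgraph of groups $\Delta(F\doublebackslash f)$, legality being measured by the gates of $f$ (these form an $F$-equivariant train track structure on $\Delta(f)$, hence descend). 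The combinatorial length $k$ is a positive integer, so I would fix once and for all a witness $g$ whose loop $\gamma = C_g$ has \emph{minimal} combinatorial length, and prove that this $g$ is a sausage.

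Two preliminary reductions. (i) $g$ is not a proper power: if $g = h^m$ with $m \ge 2$ then $h$ has the same axis and the same ratio $\|h\|_{\Gamma'}/\|h\|_\Gamma = \|g\|_{\Gamma'}/\|g\|_\Gamma$, hence is a shorter witness; more generally, if some $h \in F$ sends $A_g$ to itself and translates it by a smaller amount, a power of $h$ is a shorter witness, so the translation subgroup of $\operatorname{Stab}(A_g)$ is exactly $\langle g\rangle$, and the remaining elements of $\operatorname{Stab}(A_g)$ are finite-order involutions reversing $A_g$, which if present account for the degenerate-barbell phenomena and do not obstruct the sausage conclusion. (ii) Writing $\pi\colon C_g \to F\doublebackslash\Gamma$ for the immersion, any failure of $\pi$ to be an embedding comes from elements $h \in F \setminus \langle g\rangle$ with $hx = y$ for distinct points $x,y$ of $A_g$; when $hA_g \ne A_g$ the intersection $A_g \cap hA_g$ is a single (possibly degenerate) segment, and I would check that the coincidence of the two strands of $C_g$ through the images of $x$ and $y$ persists exactly along the translate of that segment. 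This ``interval-like'' behaviour of self-coincidences is the mechanism producing the intervals $I_j$.

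Now suppose for contradiction that $\gamma$ is not a sausage, so that $C_g$ admits no splitting into two arcs $\gamma_1$, $\bar\gamma_2$, each embedded in $F\doublebackslash\Gamma$, meeting only along a finite union of segments. Then some self-coincidence $\pi(\bar x) = \pi(\bar y)$ witnesses the obstruction, and I would surger there: cut $C_g$ at $\bar x$ and at $\bar y$ and reglue via the identification $\pi(\bar x) = \pi(\bar y)$ in $F\doublebackslash\Gamma$. This yields closed edge paths $\alpha'$, $\beta'$ with $|\alpha'| + |\beta'| \le |\gamma|$, strictly (equivalently, both nontrivial) because the chosen coincidence is not of sausage type. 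Both paths still lie in $\Delta(F\doublebackslash f)$, and the only turns not already occurring in $\gamma$ are the two new self-turns at the spliced vertex; I would control these using that $h$ (carrying $x$ to $y$) preserves gates together with the legality of the turns $\gamma$ makes at $\bar x$ and $\bar y$, concluding that some re-gluing is legal (varying over the coincidence, or over the two re-gluings, as needed). The resulting strictly shorter legal closed path lifts to a hyperbolic element of $F$ which is again a witness by Meinert's Lemma 4.15, contradicting minimality. Hence the minimal witness is a sausage.

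The hard part will be the legality and reducedness bookkeeping in the surgery step --- the one place the graph-of-groups setting genuinely demands more than the free-product argument. ``The same image point'' in $F\doublebackslash\Gamma$ must be read through the Bass immersion; the gate comparison at the spliced vertex must be performed $F$-equivariantly via $h$; and one must simultaneously verify that the spliced edge path stays cyclically reduced and nontrivial in $\pi_1$ of the graph of groups, which is more delicate when the vertex and edge groups are nontrivial. Organizing the finitely many local splice configurations --- an ordinary splice, a splice carrying a vertex group element, a splice at a degenerate overlap involving a torsion element of $\operatorname{Stab}(A_g)$ --- so that each produces a strictly shorter witness is the technical heart of the argument.
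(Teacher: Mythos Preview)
Your high-level strategy --- take a witness of minimal combinatorial length and surger if it is not a sausage --- is the same as the paper's, and your observation that legality for the train-track structure of $f$ is the same thing as ``image cyclically reduced in $F\doublebackslash\Gamma'$'' is exactly the mechanism the paper uses. The gap is in the surgery step. You write: ``some self-coincidence $\pi(\bar x) = \pi(\bar y)$ witnesses the obstruction'' and then propose a single cut-and-reglue at that coincidence, producing two loops $\alpha'$, $\beta'$ and asserting that ``some re-gluing is legal''. But you never characterize which coincidences obstruct the sausage structure, and the single cut-and-reglue move does not suffice. The paper (following Francaviglia--Martino) identifies three obstruction types --- triple points, crossing double points, and \emph{bad triangles} --- and handles each by a tailored surgery together with a case analysis of which surgered loop is cyclically reduced in $F\doublebackslash\Gamma'$.

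The bad-triangle case is where your scheme breaks. There $\gamma = \delta_1\delta_2\delta_3\delta_4\delta_5\delta_6$ with $\delta_1,\delta_3,\delta_5$ closed; cutting at the basepoint of (say) $\delta_1$ gives $\delta_1$ and $\delta_2\delta_3\delta_4\delta_5\delta_6$, and it can happen that \emph{neither} is cyclically reduced in $F\doublebackslash\Gamma'$ (both new turns can be illegal simultaneously --- knowing $a\not\sim b$ and $c\not\sim d$ in the gate partition does not force $a\not\sim d$ or $c\not\sim b$). The paper's fix is to construct instead the loop $\delta_1\delta_2\delta_3\bar\delta_2$, which traverses $\delta_2$ twice and is \emph{not} a cut-and-reglue of $\gamma$; one then has to argue separately about lengths, since this loop may be longer than $\gamma$, in which case the symmetric choice $\delta_3\delta_4\delta_5\bar\delta_4$ is shorter. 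None of this is captured by your surgery, and the phrase ``varying over the coincidence, or over the two re-gluings, as needed'' is doing work you have not supplied. To repair the argument you should carry out the three-case reduction explicitly, as the paper does.
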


\begin{proof}
    We follow the proof of~\cite[Lemma 3.14]{FrancavigliaMartino}.
    The first claim is that we can choose $g$ 
    such that the image of $C_g$ in $F\doublebackslash\Gamma$ has no triple points.
    That is, if we can write $\gamma = \delta_1\delta_2\delta_3$,
    where the endpoints of each $\delta_i$ map to the same point in $F\doublebackslash\Gamma$,
    then we may replace $\gamma$ (and thus $g$) with a shorter loop.
    If some $\delta_i$ maps to a cyclically reduced loop in $F\doublebackslash\Gamma'$,
    choose that $\delta_i$.
    If all $\delta_i$ map to reduced but not cyclically reduced paths in $F\doublebackslash\Gamma'$,
    then (after shuffling vertex group elements around,)
    the labellings of $\delta_i$ by their images in $F\doublebackslash\Gamma'$ may be written as
    \begin{align*}
        \delta_1 &= e_1\ldots \bar e_1 \\
        \delta_2 &= e_2\ldots \bar e_2 \\
        \delta_3 &= e_3\ldots \bar e_3,
    \end{align*}
    where, since $\gamma$ immerses into $F\doublebackslash\Gamma'$,
    we must have $e_1 \ne e_2$, so the loop $\delta_1\delta_2$
    is immersed in $F\doublebackslash\Gamma'$, and we choose it instead.
    This process shortens $\gamma$ in $F\doublebackslash\Gamma$,
    so eventually we obtain $g$ such that $C_g$ has no triple points.

    The second claim is that we can choose $g$
    such that the image of $C_g$ in $F\doublebackslash\Gamma$
    has no double points which \emph{cross} in the sense that we have
    $\gamma = \delta_1\delta_2\delta_3\delta_4$,
    where the initial endpoints of $\delta_1$ and $\delta_3$ are equal
    and the initial endpoints of $\delta_2$ and $\delta_4$ are equal.
    So suppose we do have such a configuration.
    Now if some $\delta_i\delta_{i+1}$ (with subscripts taken mod $4$)
    maps to a cyclically reduced path in $F\doublebackslash\Gamma'$,
    then we can choose that path as our new $g$.
    Otherwise, as before we have that
    (after shuffling vertex group elements around)
    the labellings of $\delta_i$ by their images in $F\doublebackslash\Gamma'$ may be written as
    \begin{align*}
        \delta_1\delta_2 &= e_1\ldots \bar e_1 \\
        \delta_2\delta_3 &= e_2 \ldots \bar e_2 \\
        \delta_3\delta_4 &= e_3\ldots\bar e_3 \\
        \delta_4\delta_1 &= e_4\ldots\bar e_4.
    \end{align*}
    Therefore we conclude that
    \[  \delta_i = e_i\ldots \bar e_{i+3}, \]
    with subscripts taken mod $4$.
    Since $\gamma$ immerses into $F\doublebackslash\Gamma'$,
    we have $e_1 \ne e_3$ and $e_2\ne e_4$,
    so the loop $\delta_1\bar\delta_3$ is immersed in $F\doublebackslash\Gamma'$,
    and we choose that as our new $g$.
    Again this process shortens $\gamma$ in $F\doublebackslash\Gamma$,
    so eventually we obtain $g$ such that $C_g$ has no crossing double points.

    Finally, we claim that we can avoid \emph{bad triangles:}
    i.e.~a configuration $\gamma = \delta_1\delta_2\delta_3\delta_4\delta_5\delta_6$,
    where $\delta_1$, $\delta_3$ and $\delta_5$ are closed paths
    and thus the other three form a ``triangle,'' which need not be embedded.
    If any of the paths $\delta_1$, $\delta_2$ or $\delta_3$ is cyclically reduced
    in $F\doublebackslash\Gamma'$, then we may take that loop as our new $g$.
    If none of these subpaths are cyclically reduced in $F\doublebackslash\Gamma'$,
    then arguing as before,
    we have that $\delta_1\delta_2\delta_3\bar\delta_2$ is an immersed closed path
    in $F\doublebackslash\Gamma$ whose image in $F\doublebackslash\Gamma'$
    is cyclically reduced.
    The only worry is that this path may be \emph{longer} than $\gamma$
    because $\delta_2$ is longer than $\delta_4\delta_5\delta_6$.
    This would imply that the path $\delta_3\delta_4\delta_5\bar\delta_4$,
    to which the same argument applies,
    is shorter than $\gamma$, so we choose it.
    Again this process shortens $\gamma$ in $F\doublebackslash\Gamma$,
    so eventually we obtain $g$ such that $C_g$ has no bad triangles.

    We now think of $C_g$ as labeled by the images of edges and vertices (with vertex group elements)
    in $F\doublebackslash\Gamma$.
    Identify vertices with the same vertex label and (oriented) edges with the same edge label.
    We claim that the resulting graph is a sausage.
    If there are no identifications or only a single pair of vertex identifications, we are done.
    By assumption there are no triple points,
    so each vertex of the resulting graph is at most $4$-valent.
    Since double points do not cross, if we have two pairs of double points in $C_g$,
    they ``nest''.

    If there is only a single pair of double points, we are done,
    as $\gamma$ is a figure-eight.
    If we suppose there are at least two pairs of double points,
    then there are two ``innermost'' pairs which map to vertices in $F\doublebackslash\Gamma$.
    Unlike in the original~\cite{FrancavigliaMartino}, if $v$, $w$ is an innermost pair,
    the path between $v$ and $w$ may be of the form $eg\bar e$, so is not embedded.
    In this case, choose the middle vertex of $eg\bar e$,
    while in other cases choose a point on the embedded path from $v$ to $w$ arbitrarily.
    Since there are two innermost pairs this yields two points on $C_g$
    and therefore two subpaths $\gamma_1$ and $\gamma_2$ from the first point to the second,,
    so that $\gamma = \gamma_1\bar\gamma_2$.
    Since we have no bad triangles and no crossing double points, both $\gamma_1$ and $\gamma_2$
    are embedded in $F\doublebackslash\Gamma$.
    This shows that the graph is an embedded sausage:
    the intervals $I_j$ correspond to subpaths (which may be single vertices)
    which have more than one preimage in $\gamma$.
    Since double points do not cross,
    these intervals appear in the same order in $\gamma_1$ and $\gamma_2$.
\end{proof}

With the Sausages Lemma in hand, we can prove \Cref{candidatesexist}.

\begin{thm}
    Let $\Gamma$ and $\Gamma'$ be trees in $\mathscr{T}$.
    There is a witness $g \in F$ which is a \emph{candidate}
    in the sense of \Cref{candidatesexist}.
\end{thm}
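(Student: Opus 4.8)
The plan is to start from the sausage provided by \Cref{sausagelemma} and then do a second round of surgery, this time working entirely in $F\doublebackslash\Gamma'$, to reduce the combinatorics of the sausage to one of the five listed forms. Recall that a witness $g$ has the property that the image of $C_g$ lies in $\Delta(F\doublebackslash f)$ and that $C_g$ immerses into $F\doublebackslash\Gamma'$ after subdividing; also, by \Cref{sausagelemma}, we may assume $\gamma = \gamma_1\bar\gamma_2$ with $\gamma_1,\gamma_2$ embedded in $F\doublebackslash\Gamma$ and the overlaps $I_j$ nested and occurring in the same order along both arcs. The key structural input is that the overlaps $I_1,\dots,I_m$ partition $C_g$ into overlap intervals and complementary ``excursion'' arcs; each excursion is traversed once by $\gamma_1$ and once by $\gamma_2$ (in opposite directions along $\gamma$). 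If $m \le 1$ the sausage is already an embedded simple loop or an embedded figure-eight, so we may assume $m \ge 2$.

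First I would argue that the ``extreme'' overlaps play a special role. Because the $I_j$ nest and appear in the same cyclic order on $\gamma_1$ and $\gamma_2$, there is a well-defined innermost pair of overlaps (say $I_1$ and $I_m$ after relabelling) between which everything else sits; equivalently, the sausage looks like two ``bells'' (the portions of $C_g$ outside the span of the $I_j$, which are genuine loops based at the endpoints of the outermost overlap) joined along a ``bar'' that itself may carry further nested overlaps. The goal is to collapse all the intermediate overlaps. For each intermediate overlap $I_j$ ($1 < j < m$), the two arcs of $\gamma$ running parallel over $I_j$ together with the two excursions flanking it on each side form a local configuration to which the same cut-and-choose argument from \Cref{sausagelemma} applies: either some sub-loop obtained by cutting at the endpoints of $I_j$ maps to a cyclically reduced loop in $F\doublebackslash\Gamma'$ — and since that loop is still a witness (its translation length ratio is unchanged, being a legal subpath in $\Delta$, the same bound $\lip(f)$ is attained) and strictly shorter in $F\doublebackslash\Gamma$, we recurse — or, as in the $e_1 \neq e_2$ analysis in the lemma, the immersion hypothesis in $F\doublebackslash\Gamma'$ forces the two excursions on either side of $I_j$ to begin with distinct edges, so we can splice them together across $I_j$, again producing a shorter witness with one fewer overlap. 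Either way strict length decrease in $F\doublebackslash\Gamma$ guarantees termination, leaving $m \le 2$.

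When $m = 2$, after the reductions the sausage is an embedded barbell: two embedded loops (the bells) joined by an embedded arc (the bar), where the two overlaps have shrunk to the two vertices at which the bar meets the bells — or, if either bell has itself collapsed to a point, this point is necessarily a non-free vertex (its vertex group is nontrivial, since otherwise $C_g$ would fail to be reduced or we could shorten), and we land in the singly or doubly degenerate barbell cases. The degenerate cases are exactly the phenomenon flagged in \Cref{sausagelemma} — the path between an innermost pair $v,w$ having the form $eg\bar e$ — now reinterpreted: an arc $eg\bar e$ that cannot be shortened must have $g$ nontrivial in its vertex group, contributing a non-free vertex ``cap'' in place of a loop. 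Once the form is reduced, the finiteness statement is immediate: each of the five shapes is supported on at most two loops and one connecting arc, hence uses boundedly many edges of the finite graph $F\doublebackslash\Gamma$, and since cyclically reduced edge-paths of bounded combinatorial length of one of these five shapes are finite in number (vertex-group labels do not matter for the shape and can be taken in fixed finite transversals), we get a finite set of candidates depending only on $F\doublebackslash\Gamma$.

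The main obstacle I expect is the degenerate-barbell bookkeeping: in the free-product / free-group setting of Francaviglia–Martino the bar of a barbell is always an embedded arc between two honest loops, but here an ``innermost'' overlap can be a single non-free vertex, and an excursion of the form $eg\bar e$ with $g\notin\mathcal{G}_e$ is reduced and cannot be removed by folding. Handling this cleanly requires being careful that (i) such an $eg\bar e$ arc really is legal in the train track structure on $\Delta(f)$ whenever $g$ is a witness — so that choosing the midpoint of $eg\bar e$ as a cut point still yields subpaths that are legal, hence the pieces remain witnesses — and (ii) the recursion genuinely terminates, i.e.\ that we never trade one $eg\bar e$ for another without decreasing length. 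Both points follow from Meinert's Lemma 4.15 (legality of the axis in $\Delta(f)$) together with the strict length-decrease in $F\doublebackslash\Gamma$ that drives every surgery, but the case analysis distinguishing "bell is a loop'' from "bell is a non-free vertex'' at each of the two ends, crossed with whether the bar is trivial, is where the real work lies.
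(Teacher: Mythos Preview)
Your approach diverges from the paper's in a way that makes the argument harder than it needs to be, and there is a misreading of the sausage structure that undermines your surgery step.

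First, the structural misreading: you write that ``each excursion is traversed once by $\gamma_1$ and once by $\gamma_2$ (in opposite directions along $\gamma$)''. This is not what the Sausages Lemma says. The intervals $I_j$ are the places where $\gamma_1$ and $\gamma_2$ \emph{coincide} in $F\doublebackslash\Gamma$; between consecutive $I_j$'s the two arcs are \emph{distinct} embedded paths with the same endpoints. So an ``excursion'' is a bigon, not a single arc traversed twice. Your proposed splice — ``the two excursions on either side of $I_j$ begin with distinct edges, so we can splice them together across $I_j$'' — does not describe a well-defined operation on the loop, and in particular it is unclear why the result would be shorter or have fewer overlaps while remaining a sausage. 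The analogy with the $e_1\ne e_2$ step in the Sausages Lemma breaks down because there one is recombining subarcs of a \emph{single} loop at a multiple point, whereas here you are trying to recombine two parallel arcs at an interval where they already agree.

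Second, the paper avoids all of this with a single, non-iterative surgery. Given the sausage $\gamma=\gamma_1\bar\gamma_2$ with at least two overlap intervals, take the two \emph{extremal} intervals $[a,b]$ and $[c,d]$ and define $\delta_2$ to agree with $\gamma_2$ on $[0,b]\cup[c,1]$ and with $\gamma_1$ on $[b,c]$. Because the switches at $b$ and $c$ occur inside overlap intervals, $\delta_2$ is embedded in $F\doublebackslash\Gamma$ and its $F\doublebackslash f$-image in $F\doublebackslash\Gamma'$ is reduced; the new loop $\gamma_1\bar\delta_2$ is then already a candidate (a barbell, or a singly or doubly degenerate barbell when one or both extremal ``bells'' degenerate). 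No recursion, no length-decrease termination argument, and no separate bookkeeping for the $eg\bar e$ caps is required — the degenerate cases are absorbed into the same construction. Your anticipated ``main obstacle'' (the case analysis at the two ends) is therefore not where the work lies; once you have the sausage, one surgery finishes the proof.
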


\begin{proof}
    Again the proof is nearly identical to~\cite[Proposition 3.15]{FrancavigliaMartino}.
    Let $f\colon \Gamma \to \Gamma'$ be an optimal map.
    We begin with the loop $\gamma = \gamma_1\bar\gamma_2$ supplied by \Cref{sausagelemma}.
    If the family of intervals $I_j$ is empty or contains at most a single interval $I$,
    then $\gamma$ is a candidate and we are done.
    
    So suppose that the family of intervals $I_j$ contains at least two intervals.
    We will replace $\gamma$ with a candidate.
    Let $[a,b]$ and $[c,d]$ be the two extremal intervals of $I_j$,
    i.e. $0\le a \le b < c \le d \le 1$,
    and if $0 < a$ or $d < 1$, there is no $I_j$
    in the interval $(0,a)$ or $(d,1)$ respectively.
    We replace the path $\gamma_2$ with the path
    that follows $\gamma_2$ if $t < b$ or $t > c$
    and follows $\gamma_1$ on the interval $[b,c]$.
    Note that $\delta_2$ embeds in $F\doublebackslash\Gamma$
    because $\gamma_1(t) = \gamma_2(s)$ if and only if $t = s$.
    Note as well that the $F\doublebackslash f$-image of $\delta_2$
    in $F\doublebackslash\Gamma'$ is reduced
    for the same reason and because the $F\doublebackslash f$-images
    of $\gamma_1$ and $\gamma_2$ are reduced.
    The loop $\gamma' = \gamma\bar\delta_2$ is a candidate.
\end{proof}

\section{Induced maps on the spine and reduced spine}\label{spinesection}
Suppose $\Gamma$ is a tree in $\mathscr{T}$.
In this section we will forget the metric on $\Gamma$,
thinking of it merely as a simplicial tree.
By $F$-equivariantly collapsing an orbit of edges,
we obtain a new tree $\Gamma'$. If $\Gamma' \in \mathscr{T}$,
then we say that $\Gamma'$ is obtained from $\Gamma$ by \emph{forest collapse.}
There is an induced \emph{collapse map} in the sense of the author in~\cite[Section 1]{Lyman}
$F\doublebackslash\Gamma \to F\doublebackslash\Gamma'$,
and in the sense of that paper the edge orbit determines an edge which is a collapsible forest.
The set of $F$-equivariant homeomorphism classes of trees in $\mathscr{T}$
is partially ordered under the relation of forest collapse,
and the \emph{spine of Outer Space for $F$}, denoted $K = K(F)$,
is the geometric realization of this poset.
That is, it is an ``ordered'' simplicial complex
whose vertices are elements of the poset,
whose edges witness the partial order,
and whose higher-dimensional simplices span vertices representing totally ordered subsets of the poset.
A minimal element of this poset is a \emph{reduced} tree,
i.e.~one for which collapsing any orbit of edges yields a tree not in $\mathscr{T}$.

The \emph{reduced spine of Outer Space} $L = L(F)$ comprises
those trees $\Gamma$ all of whose edges $\tilde e$ are \emph{surviving,}
in the sense that there is a reduced collapse $\Gamma'$ of $\Gamma$
in which $\tilde e$ is not collapsed.
For example, a tree $\Gamma$ is in $L(F_n)$
if and only if the quotient graph $F_n\doublebackslash\Gamma$ has no separating edges.
In the case where $F$ is a free product of finite groups,
$L(F)$ is all of the spine of Guirardel--Levitt's Outer Space $P\mathscr{O}(F)$~\cite{GuirardelLevitt}.
Let us remark that there is an easy characterization
of those edges that are surviving
by considering ``shelters'' in the quotient graph of groups
$F\doublebackslash\Gamma$
in the sense of Clay and Guirardel--Levitt~\cite{GuirardelLevittDeformation,Clay}.
We have no further need of shelters,
which are somewhat complicated to describe,
so we refer the interested reader to Guirardel--Levitt~\cite[before Lemma 7.4]{GuirardelLevittDeformation}
or to Clay~\cite[Section 1.3]{Clay} for details.

In parallel, given a homomorphism $\alpha\colon G \to \out(F_n)$
with finite domain,
Krsti\'c--Vogtmann and Bestvina--Feighn--Handel
consider a subcomplex of the fixed-point subcomplex $K_\alpha = K_\alpha(F_n)$ of $K(F_n)$
under the action of $\alpha(G) \le \out(F_n)$.
By results of Culler~\cite{Culler} and Zimmermann~\cite{Zimmermann},
the subcomplex $K_\alpha(F_n)$ is always nonempty,
and a tree $\Gamma$ representing a vertex of $K(F_n)$
belongs to $K_\alpha(F_n)$ when,
thinking in the quotient graph,
the quotient graph $F_n\doublebackslash\Gamma$ is equipped with
a $G$-action via $\alpha$.

We, like Krsti\'c--Vogtmann and Bestvina--Feighn--Handel,
are interested in a subcomplex of $K_\alpha(F_n)$
which we term $L_\alpha = L_\alpha(F_n)$.
A tree $\Gamma$ representing a vertex of $K_\alpha(F_n)$ belongs to $L_\alpha(F_n)$ when,
thinking in the quotient graph,
every edge $e$ of $F_n\doublebackslash\Gamma$ is \emph{essential,}
in the sense that it is not contained in every maximal $G$-invariant forest 
in $F_n\doublebackslash\Gamma$.

Given a homomorphism $\alpha\colon G \to \out(F_n)$ with finite domain,
let $E$ be the full preimage of $\alpha(G)$ in $\aut(F_n)$.
We can form the fiber product 
\[  E \times_{\alpha(G)}G = \{(\Phi,g) \in E \times G : [\Phi] = \alpha(g) \}, \]
where $[\Phi]$ denotes the outer class of $\Phi$ in $\out(F_n)$.
Then $F = E\times_{\alpha(G)}G$ is a virtually free group;
in fact we have the following commutative diagram of short exact sequences
\[  \begin{tikzcd}
    1 \ar[r] & F_n \ar[d,equals] \ar[r] & F \ar[r]\ar[d] & G \ar[r]\ar[d] & 1 \\
    1 \ar[r] & F_n \ar[r] & E \ar[r] & \alpha(G) \ar[r] & 1.
\end{tikzcd}\]
Conversely, if $F$ sits in a short exact sequence as in the top row above,
the conjugation action of $F$ on itself induces a map $G \to \out(F_n)$
which we suppose to be $\alpha$,
and $F \to \aut(F_n)$ whose image is $E$,
so the diagram above commutes.
Furthermore, Goursat's lemma coupled with the fact that the kernels of the maps
$F \to G$ and $F \to E$ meet trivially
implies that $F$ is isomorphic to the fiber product $E\times_{\alpha(G)}G$.
We are ready to state and prove \Cref{normalizerthm}.

\begin{thm}\label{precisethmC}
    Given $\alpha\colon G \to \out(F_n)$ a homomorphism with finite domain,
    we may consider the groups $F$ and $E$ above.
    The isometry $i\colon \PT(F) \to \PT(F_n)$ induces a simplicial isomorphism
    $L(F) \to L_\alpha$.
    The normalizer $N(\alpha(G))$ of $\alpha(G)$ in $\out(F_n)$ leaves $L_\alpha$ invariant.
    Let $\aut^0(E)$ be the subgroup of $\aut(E)$ consisting of those automorphisms
    that leave $F_n$ invariant, and let $\out^0(E)$ be its image in $\out(E)$.
    We have the following short exact sequence
    \[  \begin{tikzcd}
        1 \ar[r] & \alpha(G) \ar[r] & N(\alpha(G)) \ar[r] & \out^0(E) \ar[r] & 1.
    \end{tikzcd}\]
\end{thm}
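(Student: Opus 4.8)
The plan is to establish, in order: that $i$, once the metric on trees is forgotten, induces a simplicial isomorphism $L(F) \to L_\alpha$; that $N(\alpha(G))$ preserves $L_\alpha$; and the short exact sequence. Forgetting metrics, $i$ sends the $F$-equivariant homeomorphism class of a simplicial $F$-tree $\Gamma$ to its $F_n$-equivariant homeomorphism class, and since forest collapse commutes with restriction of the action, this is a simplicial map $K(F) \to K(F_n)$ of ordered simplicial complexes. Normality of $F_n$ implies that each $\gamma \in F$ descends to an automorphism of $F_n\doublebackslash\Gamma$ depending only on its class in $G$, so $F_n\doublebackslash\Gamma$ is equipped with a $G$-action inducing $\alpha$; hence $i$ maps $K(F)$ into the fixed subcomplex $K_\alpha$. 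It is injective, because an $F$-tree is recovered from its underlying $F_n$-tree together with this $G$-action, and since $Z(F_n) = 1$ the extensions of $G$ by $F_n$ inducing $\alpha$ form a single isomorphism class, namely $F$. It is onto $K_\alpha$ by the realization theorems of Culler~\cite{Culler} and Zimmermann~\cite{Zimmermann}: an $\alpha(G)$-fixed $F_n$-tree supports a $G$-action inducing $\alpha$, hence an action of $E$ and therefore of $F$.

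That $i$ is a poset isomorphism onto $K_\alpha$ I would reduce to a bijection --- natural in $\Gamma$, hence commuting with collapse maps, and compatible with passing to edge orbits --- between the collapsible $F$-invariant subforests of $\Gamma$ and the $G$-invariant subforests of $F_n\doublebackslash\Gamma$; together with injectivity, such a bijection gives the poset isomorphism. In one direction, the preimage in $\Gamma$ of a $G$-invariant subforest is $F$-invariant; because $F_n\doublebackslash\Gamma$ is a finite graph, its components are finite trees which lift to finite subtrees of $\Gamma$, and a finite subtree of $\Gamma$ has finite setwise $F$-stabilizer (it permutes finitely many vertices, with pointwise stabilizer a finite intersection of finite vertex groups), so collapsing it stays in $\mathscr{T}(F)$ --- collapsibility is automatic. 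Conversely, a collapsible $F$-invariant subforest has finite component stabilizers, hence trivial setwise $F_n$-stabilizer on each component, so the covering $\Gamma \to F_n\doublebackslash\Gamma$ is injective on each component and carries distinct $F_n$-translates of a component to disjoint sets; thus the image downstairs is again a disjoint union of trees. These assignments are mutually inverse, so maximal collapsible $F$-invariant subforests correspond to maximal $G$-invariant subforests; as a reduced collapse of $\Gamma$ is precisely the collapse of such a maximal subforest, an edge of $F\doublebackslash\Gamma$ is surviving if and only if the corresponding edge of $F_n\doublebackslash\Gamma$ is essential. Since $L(F)$ and $L_\alpha$ are the full subcomplexes on their vertex sets, $i$ restricts to a simplicial isomorphism $L(F) \to L_\alpha$.

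Since $N(\alpha(G))$ normalizes $\alpha(G)$, it permutes the fixed point set $K_\alpha$ of $\alpha(G)$ in $K(F_n)$. To see it preserves $L_\alpha$, recall that the action of any $\phi \in \out(F_n)$ on $K(F_n)$ only twists the marking of a tree $\Gamma$, leaving the graph $F_n\doublebackslash\Gamma$ unchanged, while replacing its $G$-action by the precomposition with the automorphism of $G$ induced through $\alpha$ by conjugation by $\phi$. Precomposing every element of $G$ with a fixed automorphism changes neither which subforests of $F_n\doublebackslash\Gamma$ are $G$-invariant nor the family of maximal ones, so the set of essential edges, and hence membership in $L_\alpha$, is unchanged.

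For the exact sequence, write $c_\Phi\colon \Psi \mapsto \Phi\Psi\Phi^{-1}$ for the conjugation automorphism of $\aut(F_n)$ attached to $\Phi \in \aut(F_n)$. It preserves $\inn(F_n)$, and preserves $E$ exactly when $[\Phi] \in N(\alpha(G))$, since $c_\Phi(E)$ is the preimage in $\aut(F_n)$ of $[\Phi]\,\alpha(G)\,[\Phi]^{-1}$. So for $\phi \in N(\alpha(G))$ and any lift $\Phi$, the restriction $c_\Phi|_E$ lies in $\aut^0(E)$, and changing $\Phi$ by an inner automorphism of $F_n$ --- which lies in $E$ --- alters $c_\Phi|_E$ by an inner automorphism of $E$; thus $\Theta\colon \phi \mapsto [c_\Phi|_E]$ is a well-defined homomorphism $N(\alpha(G)) \to \out^0(E)$. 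Its kernel is $\alpha(G)$: if $c_\Phi|_E$ is conjugation by some $\Phi_0 \in E$, then $\Phi\Phi_0^{-1}$ centralizes $E \supseteq \inn(F_n)$ and so is trivial (as $C_{\aut(F_n)}(\inn(F_n)) = 1$, $F_n$ being centerless), giving $\phi = [\Phi] = [\Phi_0] \in \alpha(G)$; conversely an element of $\alpha(G)$ has a lift in $E$ whose conjugation is inner in $E$. And $\Theta$ is onto: given $\theta \in \aut^0(E)$, restricting to $\inn(F_n)$ produces $\Phi_0 \in \aut(F_n)$ with $\theta(\iota_w) = \iota_{\Phi_0(w)}$, and applying $\theta$ to $\Phi\iota_w\Phi^{-1} = \iota_{\Phi(w)}$ for $\Phi \in E$, together with $Z(F_n) = 1$, forces $\theta = c_{\Phi_0}|_E$; as $c_{\Phi_0}$ then preserves $E$ we get $[\Phi_0] \in N(\alpha(G))$ and $\Theta([\Phi_0]) = [\theta]$. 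I expect the real obstacle to be the first step --- reconciling the graph-of-groups notions of collapsible forest and surviving edge with Krsti\'c--Vogtmann's maximal $G$-invariant forest and essential edge, and above all establishing surjectivity onto $K_\alpha$, which genuinely uses the work of Culler and Zimmermann --- while the invariance of $L_\alpha$ and the exact sequence are formal once this dictionary is in place.
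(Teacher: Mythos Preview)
Your proposal is correct and follows essentially the same route as the paper: the identification of $K(F)$ with $K_\alpha$ via restriction of the action, the correspondence between surviving edges and essential edges through the dictionary of $F$-invariant collapsible forests versus $G$-invariant forests, and the exact sequence via the conjugation map $\Phi \mapsto (\Psi \mapsto \Phi\Psi\Phi^{-1})|_E$ all match the paper's argument. The only organizational difference is in the exact sequence: the paper (following Krsti\'c) first proves that the preimage $M$ of $N(\alpha(G))$ in $\aut(F_n)$ is isomorphic to $\aut^0(E)$ and then extracts the sequence via the snake lemma, whereas you descend immediately to $N(\alpha(G)) \to \out^0(E)$ and verify the kernel and surjectivity directly---but the underlying computation is identical.
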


\begin{proof}
    As in Culler--Vogtmann~\cite{CullerVogtmann},
    we may view vertices of the spine as corresponding to trees in $\PT(F)$,
    all of whose \emph{natural} edges have the same length.
    (That is, if a vertex of $\Gamma$ has valence two,
    so that the incident edges are in the same orbit,
    we set the lengths of these edges to \emph{half}
    the length of edges that are not incident to vertices of valence two).

    Under the isometric embedding of \Cref{isometricembeddingthm},
    if $H \le F$ is a finite-index subgroup,
    vertices of the spine $K(F)$, thought of as trees in $\PT(F)$ as above,
    are sent to vertices of the spine $K(H)$.
    It is clear that this induces a poset map from the vertex set of $K(F)$ to $K(H)$.

    In the situation of the statement,
    the action of $F$ on $\Gamma$
    descends to an action of $G$ on $F_n\doublebackslash\Gamma$
    such that the \emph{quotient graph of groups} in the sense of Bass~\cite[3.2]{Bass}
    is $F\doublebackslash\Gamma$,
    and conversely every $F_n$-tree $\Gamma$ in $K(F_n)$
    such that the quotient $F_n\doublebackslash\Gamma$ is equipped with a $G$-action via $\alpha$
    is actually an $F$-tree in $K(F)$.
    We will show that a non-surviving edge in a tree $\Gamma$ corresponding
    to a vertex of $K(F)$ corresponds to an inessential $G$-invariant forest in
    $F_n\doublebackslash\Gamma$.
    Indeed, the image in $F\doublebackslash\Gamma$ of a maximal collapsible forest in $\Gamma$
    is a (collapsible) forest in the sense of the author in~\cite[Section 2]{Lyman},
    and thus a $G$-invariant forest in $F_n\doublebackslash\Gamma$.
    An edge is non-surviving if it is contained in every forest in $F\doublebackslash\Gamma$,
    and thus in every $G$-invariant forest in $F_n\doublebackslash\Gamma$
    and conversely.
    Thus the poset map above sends $L(F)$ isomorphically to $L_\alpha$.

    If $\varphi \in \out(F_n)$ belongs to the normalizer of $\alpha(G)$,
    then for all $g \in G$ we have $\varphi\alpha(g) = \alpha(g')\varphi$ for some $g' \in G$
    and for $\Gamma \in L_\alpha$, we have
    \[  (\Gamma.\varphi).\alpha(g) = (\Gamma.\alpha(g')).\varphi = \Gamma.\varphi, \]
    so $\Gamma.\varphi$ belongs to the fixed-point set of $\alpha(G)$,
    and it is clear that all of its edges remain essential,
    so we conclude $\varphi$ leaves $L_\alpha$ invariant.

    Let $\aut^0(E)$ be the subgroup of automorphisms leaving $F_n$ invariant
    and let $\out^0(E)$ be its image in $\out(E)$.
    Similarly, write $N$ for the normalizer of $\alpha(G)$ in $\out(F_n)$
    and write $M$ for its full preimage in $\aut(F_n)$.
    Following Krsti\'c~\cite{Krstic}, we will show that $M$ is isomorphic to $\aut^0(E)$.
    The final claim in the theorem will then follow from
    the snake lemma 
    (which applies because the image of each subgroup
    in the top row is normal in the bottom row)
    applied to the following commutative diagram
    \[  \begin{tikzcd}
        1 \ar[r] & F_n \ar[r]\ar[d] & E \ar[r]\ar[d] & \alpha(G) \ar[r]\ar[d] & 1 \\
        1 \ar[r] & M \ar[r,"\cong"] & \aut^0(E) \ar[r] & 1 \ar[r] & 1,
    \end{tikzcd}\]
    which yields the following exact sequence with indicated connecting homomorphism
    \[  \begin{tikzcd}
        1 \ar[r] & 1 \ar[r] & \alpha(G) \ar[r,"\delta"] & N \ar[r] & \out^0(E) \ar[r] & 1.
    \end{tikzcd}\]

    By definition, $M$ consists of automorphisms $\Phi\colon F_n \to F_n$
    such that the outer class $\varphi$ satisfies $\varphi\alpha(G)\varphi^{-1} = \alpha(G)$.
    Since $E$ is the preimage of $\alpha(G) \le \out(F_n)$ in $\aut(F_n)$,
    for $e \in E$, we have
    that the automorphism of $F_n$ defined by $x \mapsto \Phi(e\Phi^{-1}(x)e^{-1})$ is in $E$,
    say it is equal to $x \mapsto e_\Phi xe_\Phi^{-1}$.
    We claim that the map $\Psi_\Phi$ defined as $e \mapsto e_\Phi$ is an automorphism of $E$.
    For $f \in F_n$, we have $f_\Phi$ is the automorphism 
    \[
        x \mapsto \Phi(f\Phi^{-1}(x)f^{-1}) = \Phi(f)x\Phi{(f)}^{-1},
    \]
    so the restriction of $\psi_\Phi$ to $F_n$ is $\Phi$.
    An easy calculation expressing $\Phi(ee'\Phi^{-1}(x){(ee')}^{-1})$ two ways
    shows that $\psi_\Phi$ is a homomorphism.
    Since the restriction of $\psi_\Phi$ to $F_n$ is an automorphism,
    its kernel is a finite normal subgroup of $E$, hence trivial.
    Its image contains a finite-index subgroup so has finite index.
    But $E$ has a well-defined, negative Euler characteristic,
    from which we conclude that $\psi_\Phi$ is an automorphism of $E$.
    Similarly we have $\psi_{\Phi'}\psi_\Phi = \psi_{\Phi'\Phi}$,
    so the rule $\Phi \mapsto \psi_\Phi$ defines a homomorphism $M \to \aut^0(E)$.

    Conversely, given $\Phi \in \aut^0(E)$, restriction to $F_n$
    yields a homomorphism $\aut^0(E) \to \aut(F_n)$, which we want to show is in $M$.
    Indeed, for all $e \in E$ we have
    \[
        x \mapsto \Phi|_{F_n}(e\Phi|_{F_n}^{-1}(x)e^{-1}) = \Phi(e)x\Phi{(e)}^{-1},
    \]
    which is clearly in $E$.
    By definition, we have that $\psi_{\Phi|_{F_n}}(e) = \Phi(e)$,
    and we saw already that $\psi_\Phi|_{F_n} = \Phi$,
    so the operations defined above are inverse homomorphisms
    and we conclude that $M \cong \aut^0(E)$.
\end{proof}

\bibliographystyle{alpha}
\bibliography{bib.bib}

\begin{thebibliography}{BFH20}

\bibitem[Bas93]{Bass}
Hyman Bass.
\newblock Covering theory for graphs of groups.
\newblock {\em J. Pure Appl. Algebra}, 89(1-2):3--47, 1993.

\bibitem[BF14]{BestvinaFeighnFreeFactor}
Mladen Bestvina and Mark Feighn.
\newblock Hyperbolicity of the complex of free factors.
\newblock {\em Adv. Math.}, 256:104--155, 2014.

\bibitem[BFH20]{BestvinaFeighnHandel}
Mladen {Bestvina}, Mark {Feighn}, and Michael {Handel}.
\newblock {A McCool Whitehead type theorem for finitely generated subgroups of
  $\mathsf{Out}(F_n)$}.
\newblock Available at arXiv:2009.10052 [math.GR], September 2020.

\bibitem[Cla09]{Clay}
Matt Clay.
\newblock Deformation spaces of {$G$}-trees and automorphisms of
  {B}aumslag-{S}olitar groups.
\newblock {\em Groups Geom. Dyn.}, 3(1):39--69, 2009.

\bibitem[CM87]{CullerMorgan}
Marc Culler and John~W. Morgan.
\newblock Group actions on {${\bf R}$}-trees.
\newblock {\em Proc. London Math. Soc. (3)}, 55(3):571--604, 1987.

\bibitem[Cul84]{Culler}
Marc Culler.
\newblock Finite groups of outer automorphisms of a free group.
\newblock In {\em Contributions to group theory}, volume~33 of {\em Contemp.
  Math.}, pages 197--207. Amer. Math. Soc., Providence, RI, 1984.

\bibitem[CV86]{CullerVogtmann}
Marc Culler and Karen Vogtmann.
\newblock Moduli of graphs and automorphisms of free groups.
\newblock {\em Invent. Math.}, 84(1):91--119, 1986.

\bibitem[DT17]{DowdallTaylor}
Spencer Dowdall and Samuel~J. Taylor.
\newblock The co-surface graph and the geometry of hyperbolic free group
  extensions.
\newblock {\em J. Topol.}, 10(2):447--482, 2017.

\bibitem[FM11]{FrancavigliaMartino}
Stefano Francaviglia and Armando Martino.
\newblock Metric properties of outer space.
\newblock {\em Publ. Mat.}, 55(2):433--473, 2011.

\bibitem[FM15]{FrancavigliaMartinoTrainTracks}
Stefano Francaviglia and Armando Martino.
\newblock Stretching factors, metrics and train tracks for free products.
\newblock {\em Illinois J. Math.}, 59(4):859--899, 2015.

\bibitem[For02]{Forester}
Max Forester.
\newblock Deformation and rigidity of simplicial group actions on trees.
\newblock {\em Geom. Topol.}, 6:219--267, 2002.

\bibitem[GL07a]{GuirardelLevittDeformation}
Vincent Guirardel and Gilbert Levitt.
\newblock Deformation spaces of trees.
\newblock {\em Groups Geom. Dyn.}, 1(2):135--181, 2007.

\bibitem[GL07b]{GuirardelLevitt}
Vincent Guirardel and Gilbert Levitt.
\newblock The outer space of a free product.
\newblock {\em Proc. Lond. Math. Soc. (3)}, 94(3):695--714, 2007.

\bibitem[Krs92]{Krstic}
Sava Krsti\'{c}.
\newblock Finitely generated virtually free groups have finitely presented
  automorphism group.
\newblock {\em Proc. London Math. Soc. (3)}, 64(1):49--69, 1992.

\bibitem[KV93]{KrsticVogtmann}
Sava Krsti\'{c} and Karen Vogtmann.
\newblock Equivariant outer space and automorphisms of free-by-finite groups.
\newblock {\em Comment. Math. Helv.}, 68(2):216--262, 1993.

\bibitem[Lym22]{Lyman}
Rylee~Alanza Lyman.
\newblock Train track maps on graphs of groups.
\newblock {\em Groups Geom. Dyn.}, 16(4):1389--1422, 2022.

\bibitem[Mei15]{Meinert}
Sebastian Meinert.
\newblock The {L}ipschitz metric on deformation spaces of {$G$}-trees.
\newblock {\em Algebr. Geom. Topol.}, 15(2):987--1029, 2015.

\bibitem[Ser03]{Trees}
Jean-Pierre Serre.
\newblock {\em Trees}.
\newblock Springer Monographs in Mathematics. Springer-Verlag, Berlin, 2003.
\newblock Translated from the French original by John Stillwell, Corrected 2nd
  printing of the 1980 English translation.

\bibitem[Zim81]{Zimmermann}
Bruno Zimmermann.
\newblock \"{U}ber {H}om\"{o}omorphismen {$n$}-dimensionaler {H}enkelk\"{o}rper
  und endliche {E}rweiterungen von {S}chottky-{G}ruppen.
\newblock {\em Comment. Math. Helv.}, 56(3):474--486, 1981.

\end{thebibliography}
\end{document}